\newtheorem{thm}{Theorem}[section]
\theoremstyle{definition}
\newtheorem{definition}[thm]{Definition}
\newtheorem{lem}[thm]{Lemma}
\long\def\@ympar#1{%
  \@savemarbox\@marbox{\scriptsize #1}%
  \global\setbox\@currbox\copy\@marbox
  \@xympar}
\begin{document}
\title{On the propagation of singularities for a class of\\ linearised hybrid inverse problems}
\title{Propagation of singularities for linearised hybrid data impedance tomography}
\author{Guillaume Bal, Columbia University\\ Kristoffer Hoffmann, Technical University of Denmark\\  Kim Knudsen, Technical University of Denmark\\}
\renewcommand{\today}{}
\maketitle
\let\oldfbox\fbox
\renewcommand{\fbox}[1]{\vspace{0.3cm}\noindent\\\oldfbox{#1}}
\renewcommand{\d}[0]{\ensuremath{\operatorname{d}}}
\begin{abstract}
For a general formulation of linearised hybrid inverse problems in impedance tomography, the qualitative properties of the solutions are analysed. Using an appropriate scalar pseudo-differential formulation, the problems are shown to permit propagating singularities under certain non-elliptic conditions, and the associated directions of propagation are precisely identified relative to the directions in which ellipticity is lost. The same result is found in the setting for the corresponding normal formulation of the scalar pseudo-differential equations. A numerical reconstruction procedure based of the least squares finite element method is derived, and a series of numerical experiments visualise exactly how the loss of ellipticity manifests itself as propagating singularities.
\end{abstract}
\section{Introduction}
Modern tomographic methods typically involve a trade-off in performance. For example, in computerized tomography the subject's exposure to potentially harmful ionizing photons is limited by acquiring  smaller amounts of data but as a consequence artifacts appear in reconstructions. Another exmaple is Electrical Impedance Tomography (EIT), which is harmless and where the contrast in some situations can be excellent while the image resolution is very low. To overcome such limitations, new tomographic paradigms taking advantage of several different physical fields are emerging. A common feature of these methods is that a physical coupling  makes it possible, at least in theory, to obtain very good interior reconstructions with both high resolution and high contrast  by just a few simple boundary measurements. This particular field of tomography is often referred to as hybrid data or coupled physics tomography.

The mathematical description of a hybrid data tomography  can be considered as a two-step process: The first step is related to the recovery of the interior data, deals with the modelling of the experimental apparatus, the physical fields and couplings, and provides a strategy on how to calculate the interior data. 
The second step is the reconstruction of the relevant physical parameter(s) using the available mathematical models, the exterior measurements and the recovered interior data sets. In this paper we will consider a mathematical and numerical analysis of a general class of non-linear hybrid inverse problems in impedance tomography augmented with interior data of infinite precision and therefore we only consider the second step.

Hybrid data problems in impedance tomography are expected to produce images of high resolution and high contrast, but unfortunately in some situations artifacts limit the image quality. Figure~\ref{fig:artifacts} shows a numerical phantom and a reconstruction with severe artifacts obtained from solving a linearized problem; reconstructions with similar artifacts can be found in litterature.
\begin{figure}[!htb]
        \centering
        \begin{subfigure}[b]{0.22\textwidth}
                \centering
                \includegraphics[width=\textwidth]{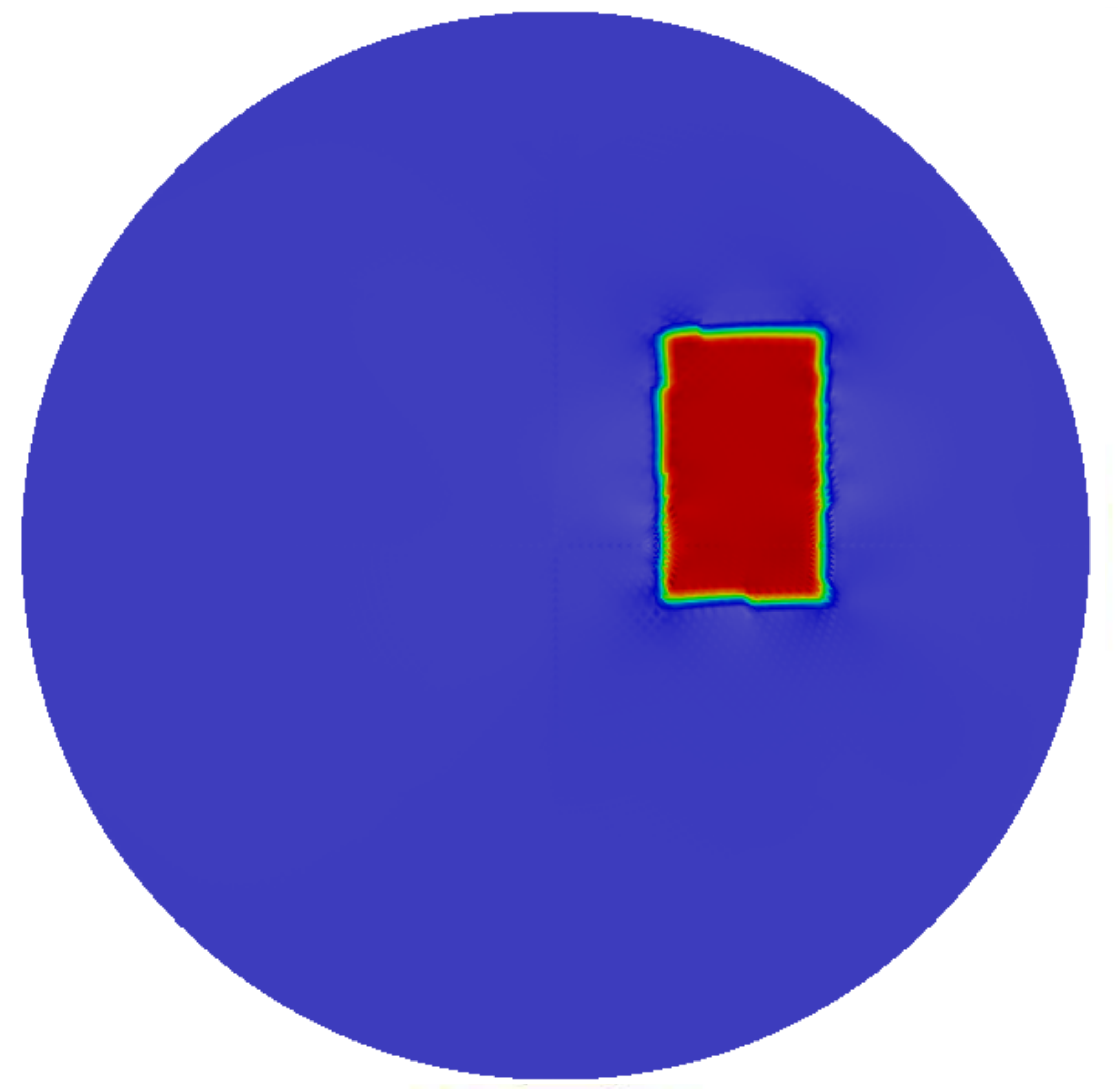}
                \caption{Phantom}
        \end{subfigure}
           \begin{subfigure}[b]{0.215\textwidth}
                \centering
                \includegraphics[width=\textwidth]{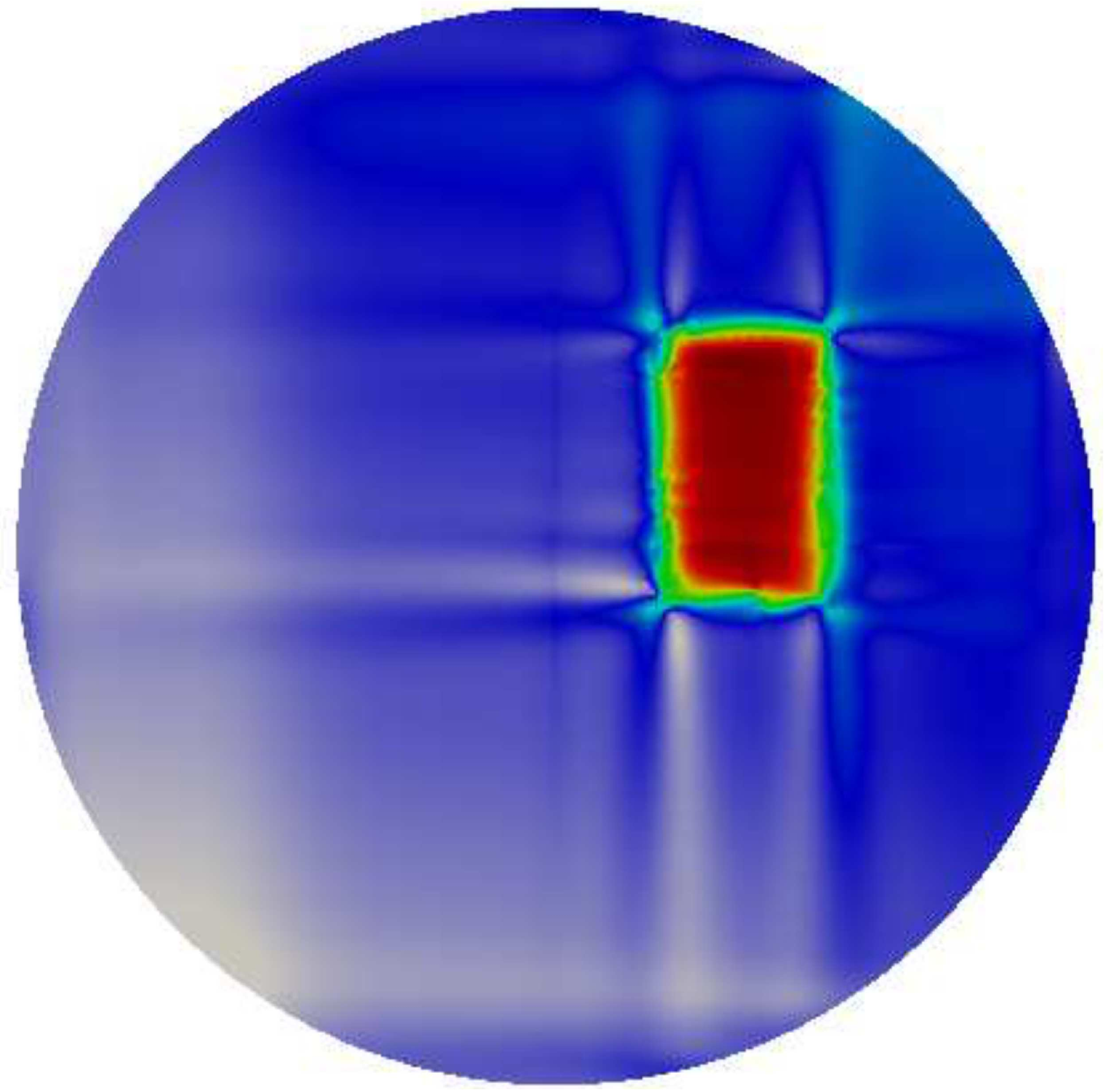} \captionsetup{justification=centering}
                \caption{Reconstruction}
        \end{subfigure}
                   \begin{subfigure}[b]{0.038\textwidth}
                \centering
                \includegraphics[width=\textwidth]{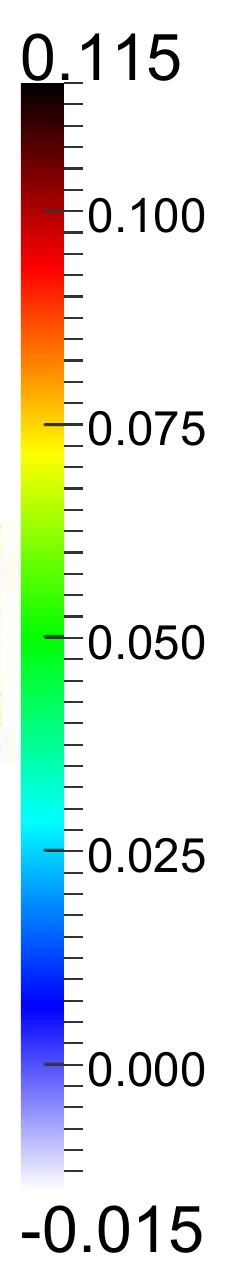}
             \caption*{}
        \end{subfigure}
        \caption{Artifacts in reconstruction due to propagation of singularities.}\label{fig:artifacts}
\end{figure}
In Computerized Tomography and Photoacoustic Tomography artifacts due to limitations in the data are well understood (see e.g.~ \cite{Frikel2013,FrikelQuinto2013, FrikelQuinto2015}), but much less is known for hybrid data problems in impedance tomography. The goal of this paper is to investigate such artifacts both theoretically using micro-local analysis and numerically. We will characterize possible artfacts and also see how the defects can be repaired. 

Hybrid inverse problems in impedance tomography are described by the
following mathematical model: Let $\Omega$ be an open, smooth and
bounded subset of $\mathbb{R}^n (n=2,3)$ and let $\sigma$ be a scalar
function bounded from above and below by positive constants in
$\Omega$. Consider the set of $J$ generalized Laplace problems
\begin{gather}\label{eq:condprob}
\left\{\begin{aligned}
\nabla \cdot (\sigma \nabla u_j) &=0 &&\text{in } \Omega,\\
u_j &=f_j &&\text{on } \partial \Omega,
\end{aligned}
\right. \quad 1\leq j \leq J.
\end{gather}
Uniqueness and regularity of solutions to such elliptic problems have been thoroughly analysed in the literature (see for instance~\cite{evans2010partial} or~\cite{gilbarg1977elliptic}).
Assume that additional scalar interior data expressed by non-linear functionals of $\sigma$ and $\nabla u_j$ are available. This text will consider interior data of the type
\begin{gather}\label{eq:data}
H_j = \sigma \vert \nabla u_j \vert^p\,\, \text{ in } \Omega, \quad p>0, \quad 1\leq j \leq J.
\end{gather}
The goal is to use the knowledge of the interior data $\{H_j\}_{j=1}^J$ to reconstruct the scalar function~$\sigma$. For $p=1$ this setting corresponds to the mathematical description of a novel imaging method called Current Density Impedance Imaging, where the data functionals $\{H_j\}_{j=1}^J$ model the interior current densities. For $p=2$ it corresponds to the mathematical description of Ultrasound Modulated Electrical Impedance Tomography (UMEIT), also known as Acousto-Electric Tomography, and $\{H_j\}_{j=1}^J$  model the interior power densities. In both cases $\sigma$ represents the scalar electrical conductivity and $u$ the electric potential. A survey of these and other hybrid imaging methods can be found in the review papers on the subject~\cite{ammari2008introduction,bal2012hybridreview,kuchment2012mathematics}. 

We will analyse the problem in the general case $p>0$ from a purely mathematical perspective and not limit the analysis to the values of $p$ which correspond to a mathematical model of certain physical quantities. However, from a practical perspective, the cases $p=1$ and $p=2$ seem to be the most interesting.

For the presented hybrid inverse problem, an analysis of the corresponding linearisation has previously been done by multiple authors. Some use methods from differential and pseudo-differential calculus to analyse the ellipticity of the linearised inverse problem and state the related stability properties when the equations are augmented with boundary conditions that satisfy the Lopatinskii criterion~\cite{bal2012hybrid,kuchment2012stabilizing}. However, one main problem is that it can be impractical or impossible to obtain an elliptic system. This could be the case if the number of measurements and choice of boundary potentials are restricted, or if there are access restrictions to parts of the boundary. The work presented in this paper is motivated by the question: What happens to the reconstruction if the linearised problem is not elliptic?  Using microlocal analysis, a theorem on the propagation of singularities can partly answer this question.

In this paper we first show that the linearised inverse problem, expressed by scalar pseudo-differential equations, in certain situations shows a hyperbolic nature, where the loss of ellipticity manifest itself as the propagation of singularities in specific directions. We show how these directions are completely determined by the directions in which ellipticity is lost. The main theoretical novelty of this paper is presented in Sec.~\ref{sec:propnormaloperator}, where the corresponding least squares formulation is analysed and we show a similar result based on the theory for pseudo-differential operators with principal symbols of constant multiplicity.

The theoretical results are visualised in two dimensions using a numerical implementation based on the least squares finite element method. This provides a robust reconstruction framework which is formulated independently of the number of measurements. The presented numerical results provide evidence that singularities indeed propagate in the predicted directions and give valuable information on how to avoid unnecessary artifacts in the regions of interest in situations where the linearised problem cannot be guaranteed to be elliptic. 

The rest of this paper is organised as follows: In Sec.~2 we formulate the linearised inverse problem in the form of a system of PDEs. A microlocal analysis of the linearised inverse problem is presented in Sec.~3. We express the inverse problem in a scalar pseudo-differential formulation, and in the case of multiple measurements, we present the corresponding normal equation. The principal symbol of the relevant operators are analysed, and we find the bicharacteristic curves along which singularities can propagate. In Sec.~4 we present a numerically implementation of the linearised inverse problem based on the least squares finite element method. Results from this implementation are presented in Sec.~5, where it is visualised how the loss of ellipticity indeed manifest itself as propagation singularities exactly in the directions predicted by theory. A conclusion based of these findings is made in Sec.~6.
\section{Setting the stage}
\label{chap:mathset}
In this section we derive the linearised inverse problem and present it in a matrix form. Next we tranform it into a scalar form that will be the starting point for the classification of the relevant operators and the analysis of the propagation of singularities. 

\subsection{The linearized problem}
Let $C_+^\infty(\bar{\Omega})$ be the set of smooth functions in $\bar{\Omega}$ with a strictly positive lower and upper bound. In the rest of this paper we assume that
 $\tilde{\sigma} \in C_+^\infty(\bar{\Omega})$ and that the boundray conditions $f_j \in C^\infty (\partial\Omega).$ Let $\tilde{u}_j$ be the unique reference solution to~\eqref{eq:condprob} when $\sigma$ is replaced by $\tilde{\sigma}$. By standard elliptic regularity theory it follows that $\tilde{u}_j \in C^\infty(\bar\Omega)$~\cite{evans2010partial}. We will assume throughout that $|\nabla \tilde u_j| \not = 0$ in $\bar \Omega.$

The  Fréchet derivative of  the interior data operator $\mathcal{H}_j: \sigma \mapsto \sigma \vert \nabla u_j \vert^p$ at $\tilde{\sigma}$ is under the stated  assumptions 
given by 
\begin{gather}
\d \mathcal{H}_j\vert_{\tilde{\sigma}} [\delta \sigma] = \delta \sigma \vert \nabla \tilde{u}_j \vert^p + p \tilde{\sigma} \frac{\nabla \tilde{u}_j \cdot \nabla \delta u_j}{\vert \nabla \tilde{u}_j \vert^{2-p}}\, \text{ in } \Omega, \quad 1\leq j \leq J,
\end{gather}
where $\delta u_j \in H^1_0(\Omega)$ solves
\begin{gather}\label{eq:deltau}
\left\{\begin{aligned}
\nabla \cdot (\tilde{\sigma} \nabla \delta u_j)&=-\nabla \cdot (\delta \sigma \nabla \tilde{u}_j)\\
 \delta u_j &= 0
\end{aligned}
\begin{aligned}
&\text{ in } \Omega,\\
&\text{ on } \partial \Omega,
\end{aligned}
\right. \quad 1\leq j \leq J.
\end{gather}
The linearized problem for the difference  $\sigma - \tilde{\sigma}=\delta \sigma$ is thus
\begin{gather}\label{eq:lindata}
\begin{aligned}
\d \mathcal{H}_j\vert_{\tilde{\sigma}} [\delta \sigma] &= H_j-\tilde{H}_j \\
\end{aligned}
\begin{aligned}
&\text{ in } \Omega,\\
\end{aligned}
\quad 1\leq j \leq J,
\end{gather}
where the reference interior data is given by $\tilde{H}_j = \tilde{\sigma} \vert \nabla \tilde{u}_j \vert^p$. 

 The equations~\eqref{eq:deltau} and~\eqref{eq:lindata} form a collection of linear PDE problems for $\{\delta \sigma, \{\delta u_j\}_{j=1}^J\}$ expressed in the matrix form
\begin{gather}\label{eq:linsystem}
\begin{bmatrix}
\vert \nabla \tilde{u}_j \vert^p & p \tilde{\sigma} \frac{\nabla \tilde{u}_j \cdot \nabla [\cdot]}{\vert \nabla \tilde{u}_j \vert^{2-p}}\\
\nabla \cdot ([\cdot] \nabla \tilde{u}_j) & \nabla \cdot (\tilde{\sigma}  \nabla [\cdot])
\end{bmatrix} \begin{bmatrix}
\delta \sigma \\
\delta u_j
\end{bmatrix} = \begin{bmatrix}
H_j - \tilde{H}_j \\
0
\end{bmatrix} \text{ in } \Omega, \quad 1 \leq j \leq J,
\end{gather}
equipped with homogeneous Dirichlet boundary conditions for $\{\delta u_j\}_{j=1}^J$. This is the matrix formulation of the linearised inverse problem.

\subsection{Transformation into a system of scalar problems}
The symbolic calculus becomes more transparant if we express the linearised inverse problem by a one scalar equations. 
Elimination of $\{\delta u_j\}_{j=1}^J$~\eqref{eq:linsystem} gives for each $j$ the scalar 
equation
\begin{gather}\label{eq:scalarprobs}
P_j \delta \sigma = H_j-\tilde{H}_j, \quad 1 \leq j \leq J,
\end{gather}
where $P_j$ is the  non-local 
operator defined by
\begin{gather}\label{eq:pj}
P_jh = \vert \nabla \tilde{u}_j \vert^ph +  p \tilde{\sigma} \frac{\nabla \tilde{u}_j \cdot \nabla L_{\tilde{\sigma}}^{-1}(\nabla \cdot (h  \nabla \tilde{u}_j))}{\vert \nabla \tilde{u}_j \vert^{2-p}}.
\end{gather}
Here $L_{\tilde{\sigma}}^{-1} := \left(-\nabla \cdot(\tilde{\sigma} \nabla[\cdot]) \right)^{-1}$ denotes the solution operator to
\begin{gather}\label{eq:deltau}
\left\{\begin{aligned}
-\nabla \cdot (\tilde{\sigma} \nabla v)&=f\\
 v &= 0
\end{aligned}
\begin{aligned}
&\text{ in } \Omega,\\
&\text{ on } \partial \Omega,
\end{aligned}
\right. . 
\end{gather}

The linear scalar equations~\eqref{eq:scalarprobs} are for $J>1$ formally an overdetermined system of equations for $\delta \sigma$. In this case we are going to analyse the normal form of~\eqref{eq:scalarprobs} given by the single scalar equation
\begin{gather}\label{eq:lsform}
\sum_{j=1}^J P_j^*P_j \delta \sigma = \sum_{j=1}^J P_j^*(H_j-\tilde{H}_j).
\end{gather}
Here $P_j^*\in  \Psi^0(\Omega)$ denotes the formal adjoint of $P_j$~\cite[p. 21]{Treves1}.


\section{Microlocal analysis of the linearised inverse problem}
As mentioned in the introduction, several authors have analysed the general stability properties of the linearised inverse problem \eqref{eq:linsystem}. Kuchment and Steinhauer~\cite{kuchment2012stabilizing}, and Bal~\cite{bal2012hybrid} have shown how this depends on the set of gradients $\{\nabla \tilde{u}_j\}_{j=1}^J$. In this section we will extend these results to the normal formulation~\eqref{eq:lsform} and explain exactly how the loss of stability manifest itself as propagating singularities in the reconstruction. It should be noted that the first two theorems in this section are  similar to results by Kuchment and Steinhauer~\cite{kuchment2012stabilizing} and included here for completeness.

In the following we denote by $\Psi^m(\Omega)$ the space of standard pseudo-differential operators of order $m$ in $\Omega$~\cite[Def. 2.3]{Treves1} and by $S^m(\Omega)$ the space of symbols of order $m$~\cite[Def. 4.1]{Treves1}. For a classical pseudo-differential operator $P \in \Psi^m(\Omega)$ we denote by $\mathfrak{p}$ the full symbol, $\mathfrak{p}^m$ the principal symbol, by $\mathfrak{p}^{m-1}$ the part of the symbol which is positively homogeneous of order $m-1$ in $\xi$ and so on. 

\subsection{Classification of the scalar operator and the normal operator}\label{sec:classifi}
The first theorem concerns the operator $P_j:$
\begin{thm}\label{thm:pj}
Let $\tilde{\sigma}\in C_+^\infty(\bar{\Omega})$ and $\partial \Omega$ be smooth. Then the operator $P_j$ given by~\eqref{eq:pj} is a classical and properly supported pseudo-differential operator in $\Omega$ of order zero, with principal symbol
\begin{gather}\label{eq:pjprinsym}
\mathfrak{p}_j^0 := \vert \nabla \tilde{u}_j \vert^p\left(1   - p \frac{ (\nabla \tilde{u}_j \cdot \xi)^2}{\vert \nabla \tilde{u}_j \vert^{2} |\xi| ^2} \right).
\end{gather}
In addition the normal operator $P_j^*P_j$ is also a classical and properly supported pseudo-differential operator in $\Omega$ of order zero with principal symbol $(\mathfrak{p}_j^0)^2.$
\end{thm}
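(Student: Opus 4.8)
The plan is to decompose $P_j$ into a sum of a multiplication operator and a four-fold composition of elementary operators, to identify the pseudo-differential class and the principal symbol of each factor, and then to invoke the composition and adjoint calculus for classical pseudo-differential operators. Write $P_j = M_j + B_j$, where $M_j h = |\nabla\tilde u_j|^p h$ is multiplication by a function in $C_+^\infty(\bar\Omega)$ — here the standing assumption $|\nabla\tilde u_j|\neq 0$ on $\bar\Omega$ together with $\tilde u_j\in C^\infty(\bar\Omega)$ is used — so that $M_j$ is a classical, properly supported element of $\Psi^0(\Omega)$ whose full and principal symbols both equal $|\nabla\tilde u_j|^p$, and
\[
B_j h = \frac{p\tilde\sigma}{|\nabla\tilde u_j|^{2-p}}\,(\nabla\tilde u_j\cdot\nabla)\, L_{\tilde\sigma}^{-1}\,(A_j h),\qquad A_j h := \nabla\cdot(h\nabla\tilde u_j).
\]
The outer factor is again multiplication by a function in $C_+^\infty(\bar\Omega)$, while $A_j h = \nabla\tilde u_j\cdot\nabla h + (\Delta\tilde u_j)\,h$ and $\nabla\tilde u_j\cdot\nabla$ are first-order differential operators with smooth coefficients, hence classical properly supported elements of $\Psi^1(\Omega)$, each with principal symbol $i\,\nabla\tilde u_j\cdot\xi$.

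The only factor that is not literally a pseudo-differential operator is the Dirichlet solution operator $L_{\tilde\sigma}^{-1}$, and handling it is the crux of the argument. Since $\tilde\sigma\in C_+^\infty(\bar\Omega)$, the operator $L_{\tilde\sigma}$ is a second-order elliptic differential operator with smooth coefficients and principal symbol $\tilde\sigma|\xi|^2$. I would fix a properly supported classical parametrix $Q_j\in\Psi^{-2}(\Omega)$ with principal symbol $(\tilde\sigma|\xi|^2)^{-1}$, so that $L_{\tilde\sigma}Q_j - I$ and $Q_j L_{\tilde\sigma} - I$ are regularizing. Applying $L_{\tilde\sigma}$ to $(L_{\tilde\sigma}^{-1}-Q_j)f$ for a compactly supported distribution $f$ in $\Omega$ gives $L_{\tilde\sigma}(L_{\tilde\sigma}^{-1}-Q_j)f \in C^\infty(\Omega)$, so by interior elliptic regularity (no boundary condition enters the interior) $(L_{\tilde\sigma}^{-1}-Q_j)f\in C^\infty(\Omega)$; thus $L_{\tilde\sigma}^{-1}-Q_j$ is regularizing in $\Omega$. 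Consequently $B_j$ agrees, modulo a regularizing operator, with the composition $\bigl(p\tilde\sigma\,|\nabla\tilde u_j|^{p-2}\bigr)\circ(\nabla\tilde u_j\cdot\nabla)\circ Q_j\circ A_j$ of classical properly supported operators of orders $0,1,-2,1$. By the composition calculus this is a classical properly supported operator in $\Psi^0(\Omega)$, and hence so is $P_j$ up to a regularizing term — which is the sense in which ``properly supported'' is to be read for the non-local operator $P_j$.

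The principal symbol of $P_j$ is then the sum of the principal symbol of $M_j$ and the product of the principal symbols of the four factors of $B_j$:
\begin{align*}
\mathfrak{p}_j^0 &= |\nabla\tilde u_j|^p + \frac{p\tilde\sigma}{|\nabla\tilde u_j|^{2-p}}\cdot\bigl(i\,\nabla\tilde u_j\cdot\xi\bigr)\cdot\frac{1}{\tilde\sigma|\xi|^2}\cdot\bigl(i\,\nabla\tilde u_j\cdot\xi\bigr)\\
&= |\nabla\tilde u_j|^p\Bigl(1 - p\,\frac{(\nabla\tilde u_j\cdot\xi)^2}{|\nabla\tilde u_j|^2|\xi|^2}\Bigr),
\end{align*}
which is~\eqref{eq:pjprinsym}. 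For the normal operator, recall that the formal adjoint of a classical properly supported operator in $\Psi^0(\Omega)$ is again such an operator, with principal symbol the complex conjugate of the original~\cite[p. 21]{Treves1}; since $\mathfrak{p}_j^0$ is real-valued, $P_j^*$ has principal symbol $\mathfrak{p}_j^0$ itself. Composing, $P_j^*P_j$ is a classical properly supported operator in $\Psi^0(\Omega)$ with principal symbol $\overline{\mathfrak{p}_j^0}\,\mathfrak{p}_j^0 = (\mathfrak{p}_j^0)^2$. The only genuinely delicate step is the reduction of $L_{\tilde\sigma}^{-1}$ to a parametrix modulo a regularizing operator; everything else is bookkeeping within the symbol calculus.
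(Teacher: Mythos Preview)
Your proof is correct and follows essentially the same composition-and-symbol-calculus strategy as the paper: factor $P_j$ into smooth multiplications, two first-order differential operators, and an inverse of an elliptic second-order operator, then multiply principal symbols and invoke the adjoint/composition calculus for $P_j^*P_j$. The one place you are more careful than the paper is in distinguishing the Dirichlet solution operator $L_{\tilde\sigma}^{-1}$ from a properly supported parametrix $Q_j$ and arguing via interior elliptic regularity that their difference is regularizing; the paper simply calls $L_{\tilde\sigma}^{-1}$ a parametrix and moves on, so your caveat that ``properly supported'' should be read modulo a smoothing remainder is a genuine refinement rather than a deviation.
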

\begin{proof}
It follows from standard elliptic regularity that $\tilde{\sigma} \in  C_+^\infty(\bar{\Omega})$ implies $\tilde{u}_j  \in C^\infty(\Omega)$. Therefore $\nabla \cdot [\cdot]\nabla \tilde{u}_j$ and  $\nabla \tilde{u}_j \cdot \nabla[\cdot]$ are first order differential operators with smooth coefficients and as a result both classical and properly supported pseudo-differential operators in $\Psi^1(\Omega)$~\cite[p. 11]{komech1999elements},\cite[p. 16]{shubin2001pseudodifferential}. Because $\nabla \cdot \tilde{\sigma} \nabla [\cdot]$ is a second order elliptic differential operator, it follows that there exists a parametrix, denoted by $L_{\tilde{\sigma}}^{-1}$, which is a classical and properly supported pseudo-differential elliptic operator and an element of $\Psi^{-2}(\Omega)$~\cite[Thm. 1.3]{komech1999elements}. The operator
\begin{gather}
\nabla \tilde{u}_j \cdot \nabla L_{\tilde{\sigma}}^{-1}(\nabla \cdot ([\cdot]  \nabla \tilde{u}_j))
\end{gather}
is therefore a composition of pseudo-differential operators that are both classical and properly supported in $\Omega$ and therefore also such an operator. It follows that $P_j\in \Psi^0(\Omega)$ is also both classical and properly supported. The principal symbol of $P_j$ is then the product of the principal symbols of each operator in the composition and this is easily found to be the expression given by~\eqref{eq:pjprinsym}.

Note that since $P_j$ is classical and properly supported the composition $P_j^*P_j$ makes sense and the sympol is obtained by squaring $\mathfrak{p}_j^0 .$ 
\end{proof}

The microlocal analysis is related to the properties of the principal symbol of $P_j$. As we now show, the ellipticity of $P_j$ depends on the parameter $p$.
\begin{thm}\label{thm:pjell}
Suppose $\vert \nabla \tilde{u}_j \vert \neq 0$. Then $P_j$ is elliptic in $\Omega$ if, and only if, $p<1$.
\end{thm}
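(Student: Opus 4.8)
The plan is to use the standard characterisation that a classical, properly supported pseudo-differential operator of order zero is elliptic in $\Omega$ exactly when its principal symbol is nonvanishing on $T^*\Omega\setminus 0$, i.e.\ $\mathfrak{p}_j^0(x,\xi)\neq 0$ for every $x\in\Omega$ and every $\xi\neq 0$. By Theorem~\ref{thm:pj} the principal symbol is given by~\eqref{eq:pjprinsym}, and since $\vert\nabla\tilde{u}_j\vert\neq 0$ in $\bar\Omega$ the prefactor $\vert\nabla\tilde{u}_j\vert^p$ is strictly positive; hence ellipticity is equivalent to
\[
1-p\,\frac{(\nabla\tilde{u}_j(x)\cdot\xi)^2}{\vert\nabla\tilde{u}_j(x)\vert^{2}\,\vert\xi\vert^{2}}\neq 0\qquad\text{for all }x\in\Omega,\ \xi\neq 0.
\]

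Next I would analyse the Cauchy--Schwarz ratio $t=t(x,\xi):=(\nabla\tilde{u}_j(x)\cdot\xi)^2/(\vert\nabla\tilde{u}_j(x)\vert^{2}\vert\xi\vert^{2})$, which satisfies $0\le t\le 1$. The key elementary point is that, with $x$ fixed and $n\ge 2$, the map $\xi\mapsto t(x,\xi)$ takes every value in $[0,1]$: taking $\xi\parallel\nabla\tilde{u}_j(x)$ gives $t=1$, taking $\xi\perp\nabla\tilde{u}_j(x)$ gives $t=0$, and varying the angle between $\xi$ and $\nabla\tilde{u}_j(x)$ yields all intermediate values by continuity. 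Since the bracket above equals $1-pt$, it follows that $\mathfrak{p}_j^0(x,\xi)$ vanishes for some $\xi\neq 0$ precisely when $1/p\in[0,1]$, i.e.\ when $p\ge 1$. Concretely: if $p<1$ then $1-pt\ge 1-p>0$ for all admissible $t$, so $P_j$ is elliptic everywhere in $\Omega$; if $p=1$ then $t=1$ (any $\xi\parallel\nabla\tilde{u}_j(x)$) gives $\mathfrak{p}_j^0=0$; and if $p>1$ then the value $t=1/p\in(0,1)$ is attained, again giving $\mathfrak{p}_j^0=0$. This is exactly the claimed equivalence.

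The argument reduces to a single Cauchy--Schwarz estimate together with the surjectivity of $t$ onto $[0,1]$, so I do not anticipate a genuine obstacle. The only step deserving a moment's care is the attainability of the value $1/p$ (for $p>1$) and of the endpoints $0$ and $1$ — this is where the two standing hypotheses enter: $\vert\nabla\tilde{u}_j\vert\neq 0$ ensures $\nabla\tilde{u}_j(x)\neq 0$ so that a parallel direction exists, and $n\ge 2$ ensures there is a covector orthogonal to $\nabla\tilde{u}_j(x)$. One should also record that ellipticity here is a pointwise-in-$x$ condition, so that since the dichotomy depends only on $p$ the conclusion "$P_j$ elliptic in $\Omega$" indeed holds uniformly over $x\in\Omega$.
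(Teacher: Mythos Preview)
Your proposal is correct and follows essentially the same route as the paper: reduce ellipticity to nonvanishing of $1-pt$ with $t=(\nabla\tilde{u}_j\cdot\xi)^2/(\vert\nabla\tilde{u}_j\vert^{2}\vert\xi\vert^{2})\in[0,1]$, use Cauchy--Schwarz for $p<1$, and for $p\ge 1$ exhibit a $\xi$ making $t=1/p$. The paper produces that $\xi$ by rotating $\nabla\tilde{u}_j(x)$ through an angle $\alpha_j$ with $\cos\alpha_j=1/\sqrt{p}$, which is just your surjectivity-of-$t$ argument made explicit.
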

\begin{proof}
$P_j$ is elliptic in $\Omega$ if, and only if, its principal symbol is non-zero for all $(x,\xi)\in T^*\Omega\setminus 0$. When $\vert \nabla \tilde{u}_j \vert \neq 0$ a vanishing principal symbol is equivalent to
\begin{gather}
\frac{(\nabla \tilde{u}_j(x)\cdot \xi)^2}{\vert \nabla \tilde{u}_j(x) \vert^2 \vert \xi \vert ^2} = \frac{1}{p},
\end{gather}
for some $(x,\xi)\in T^*\Omega\setminus 0$. The expression of the left hand side is clearly bounded by 1. Thus if $p<1$ the equality is nowhere satisfied and $P_j$ is therefore elliptic in $\Omega$. If $p\geq 1$, consider $\xi =v(x)$ where $v(x)$ is the vector $\nabla \tilde{u}_j(x)$ rotated by an angle $\alpha_j$, satisfying $\cos\alpha_j = \frac{1}{\sqrt{p}}$. Then the principal symbol vanishes for all $(x,v(x))\in T^*\Omega\setminus 0$ and $P_j$ is therefore nowhere elliptic.
\end{proof}
For $p\geq 1$ the operaaor $P_1$ is not elliptic and the calculation shows the relevance of one particular direction $\xi$ given by a rotation of the field $\tilde \nabla u(x)$ by the angle $\alpha_j$ satisfying $\cos(\alpha_j) = \frac 1 {p_j}.$ We will say that $P_j$ loses ellipticity in the direction $\xi$ at the point $x.$

A similar result can be found by~Bal~\cite{bal2012hybrid}. The theorem above implies that if $p\geq1$, more than one measurement of the interior data is necessary to have at least one elliptic operator in the set $\{P_j\}_{j=1}^J$ at every point $(x,\xi) \in T^*\Omega\setminus 0$. In this situation it makes sense to consider the problem formulated in the scalar normal form expressed by~\eqref{eq:lsform}. The condition that ensures ellipticity of the operator associated with the normal equation is stated in the following theorem.
\begin{thm}\label{thm:pj2ell}
$\sum_{j=1}^J P_j^* P_j$ is elliptic at $(x,\xi)\in T^*\Omega\setminus 0$ if, and only if, there exists $\vert \nabla \tilde{u}_j(x) \vert \neq 0$ such that $\frac{(\nabla \tilde{u}_j(x)\cdot \xi)^2}{\vert \nabla \tilde{u}_j(x) \vert^2 \vert \xi \vert ^2} \neq \frac{1}{p}$.
\end{thm}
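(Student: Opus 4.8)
The plan is to reduce everything to the principal symbol. By Theorem~\ref{thm:pj}, each $P_j^*P_j$ is a classical, properly supported operator in $\Psi^0(\Omega)$ with principal symbol $(\mathfrak{p}_j^0)^2$, and since the principal symbol of a finite sum of operators of the same order is the sum of the principal symbols, the operator $\sum_{j=1}^J P_j^*P_j \in \Psi^0(\Omega)$ has principal symbol
\[
q(x,\xi) \;:=\; \sum_{j=1}^J \big(\mathfrak{p}_j^0(x,\xi)\big)^2 .
\]
By definition, $\sum_{j=1}^J P_j^*P_j$ is elliptic at $(x,\xi)\in T^*\Omega\setminus 0$ precisely when $q(x,\xi)\neq 0$, so the whole statement is about when this scalar function vanishes.

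First I would observe that $q \ge 0$ everywhere, being a sum of squares of the real-valued functions $\mathfrak{p}_j^0$; hence $q(x,\xi)=0$ if and only if $\mathfrak{p}_j^0(x,\xi)=0$ for every $j\in\{1,\dots,J\}$. Next, using the explicit expression~\eqref{eq:pjprinsym} and the standing assumption $|\nabla\tilde u_j|\neq 0$ in $\bar\Omega$, the vanishing of an individual $\mathfrak{p}_j^0$ at $(x,\xi)$ is — exactly as in the proof of Theorem~\ref{thm:pjell} — equivalent to
\[
\frac{(\nabla\tilde u_j(x)\cdot\xi)^2}{|\nabla\tilde u_j(x)|^2\,|\xi|^2} \;=\; \frac{1}{p}.
\]
(If one wanted to allow some $\nabla\tilde u_j(x)$ to vanish, that index simply contributes $\mathfrak{p}_j^0\equiv 0$ and plays no role in ellipticity, which is why the statement quantifies over $j$ with $|\nabla\tilde u_j(x)|\neq 0$.)

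Combining the two observations, $q(x,\xi)=0$, i.e.\ loss of ellipticity at $(x,\xi)$, holds exactly when the displayed identity holds for all admissible $j$ simultaneously. Negating this gives the theorem: $\sum_{j=1}^J P_j^*P_j$ is elliptic at $(x,\xi)$ if and only if there is at least one index $j$ with $|\nabla\tilde u_j(x)|\neq 0$ for which $\frac{(\nabla\tilde u_j(x)\cdot\xi)^2}{|\nabla\tilde u_j(x)|^2|\xi|^2}\neq\frac1p$. There is no real analytic obstacle in this argument; the only point that needs a little care is the passage "$q$ elliptic $\iff$ the $\mathfrak{p}_j^0$ have no common zero at $(x,\xi)$", which relies on $q$ genuinely being a finite sum of squares at the principal-symbol level — and this rests on Theorem~\ref{thm:pj}, in particular on $P_j$ being classical and properly supported so that $P_j^*P_j$ and the sum over $j$ behave as expected for principal symbols.
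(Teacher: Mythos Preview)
Your argument is correct and follows essentially the same route as the paper: compute the principal symbol of $\sum_{j=1}^J P_j^*P_j$ as the sum of the nonnegative squares $(\mathfrak{p}_j^0)^2$, and observe that ellipticity at $(x,\xi)$ is equivalent to at least one factor $\mathfrak{p}_j^0(x,\xi)$ being nonzero, which by~\eqref{eq:pjprinsym} is the stated condition. The paper's proof is just a terser version of what you wrote.
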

\begin{proof}
The principal symbol of $\sum_{j=1}^J P_j^* P_j$ is 
given by
\begin{gather}\label{eq:prinsymls}
\sum_{j=1}^J \vert \nabla \tilde{u}_j \vert^{2p}\left(  1 - p \frac{ (\nabla \tilde{u}_j \cdot \xi)^2}{\vert \nabla \tilde{u}_j \vert^2 |\xi| ^2} \right)^2.
\end{gather}
This expression is clearly positive at $(x,\xi)\in T^*\Omega\setminus 0$ if, and only if, there exists $\vert \nabla \tilde{u}_j(x) \vert \neq 0$ such that $\frac{(\nabla \tilde{u}_j(x)\cdot \xi)^2}{\vert \nabla \tilde{u}_j(x) \vert^2 \vert \xi \vert ^2} \neq \frac{1}{p}$.
\end{proof}
The implication of Thm.~\ref{thm:pj2ell} is that the ellipticity of $\sum_{j=1}^J P_j^* P_j$ is determined by the set $\{\nabla \tilde{u}_j \}_{j=1}^J$. A simple geometrical analysis gives the following results~\cite{bal2012hybrid,kuchment2012stabilizing}: Loss of ellipticity corresponds to the intersection of the cones (in the $\xi$-variable) given by the equations $\frac{(\nabla \tilde{u}_j(x)\cdot \xi)^2}{\vert \nabla \tilde{u}_j(x) \vert^2 \vert \xi \vert ^2} - \frac{1}{p} = 0$ at some point away from the apex. Thus, if $p<1$ the operator is always elliptic. For $p\geq1$, choosing $J\geq n$ is necessary to obtain an elliptic operator. Now, if $\nabla \tilde{u}_1$ and $\nabla \tilde{u}_2$, corresponding to the imposed boundary conditions $f_1$ and $f_2$, are known to be nowhere parallel, then making a third measurement with the boundary condition $f_1+f_2$ ensures that the system becomes elliptic in both two and three dimensions. Actually this result can be extended to any dimension larger than or equal two~\cite{bal2012hybrid}. However, it is in general not possible to ensure that the gradients are not parallel. In two dimensions it is possible to ensure that the gradients are non-zero and never parallel if $\tilde{\sigma}$ is sufficiently smooth~\cite{alessandrini2001univalent}. If furthermore the boundary conditions are almost two-to-one, then the magnitude of the gradients are bounded below by a positive constant in $\Omega$~\cite{nachman2007conductivity}. In three dimensions it can be shown that similar results cannot exist, but in certain situations critical points can be avoided by a certain mathematical construction, corresponding to choosing boundary conditions as traces of specific complex geometric optics (CGO) solutions~\cite{bal2012cauchy}.
 \subsection{Propagation of singularities}
When a problem is not elliptic it is possible for the solution to have more singularities than those present in the right-hand side of the equation. However, the additional singularities can only be present along curves where the principal symbol vanishes. This is a simple interpretation of the classical result on the propagation of singularities. As we will see in the following analysis, it is possible to calculate these curves for the linearised inverse problem and state necessary conditions for the presence of singularities along such curves.
\subsubsection{The operator $\boldsymbol{P_1}$}
When only a single interior data set is available, the relevant equation to analyse is
\begin{gather}\label{eq:p1prob}
P_1 \delta \sigma = H_1-\tilde{H}_1.
\end{gather}
Thus, the propagation of singularities depends on the (principal) symbol of $P_1,$ which by Thm.~\ref{thm:pj} is given by
\begin{equation}\label{eq:principalP1}
\mathfrak{p}_1^0 := \vert \nabla \tilde{u}_1 \vert^p \left(  1 - p \frac{ (\nabla \tilde{u}_1 \cdot \xi)^2}{\vert \nabla \tilde{u}_1 \vert^{2} |\xi| ^2} \right).
\end{equation}
The standard theory on propagation of singularities concerns principal symbol of real principal type. By definition \cite{egorov1993microlocal}, a scalar symbol $\mathfrak{q}(x,\xi)$ is of real principal type if, and only if, it is real and for $\xi \not = 0$
\begin{gather}
\mathfrak{q}(x,\xi) = 0 \Rightarrow \frac{\partial}{\partial \xi} q(x,\xi) \neq 0~.
\end{gather}
We will next show that the symbol \eqref{eq:principalP1} is of this kind:
\begin{lem}\label{lem:qrpt}
For $p\neq1$, the principal symbol \eqref{eq:principalP1} of $P_1$ is of real principal type.
\end{lem}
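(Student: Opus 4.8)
The plan is to verify, directly from the explicit formula \eqref{eq:principalP1}, the two defining properties of a symbol of real principal type. Reality is immediate: since $\tilde u_1 \in C^\infty(\bar\Omega)$ is real-valued, every quantity occurring in $\mathfrak{p}_1^0$ is real, so $\mathfrak{p}_1^0(x,\xi)\in\mathbb{R}$ for all $(x,\xi)$. It then remains to check the implication $\mathfrak{p}_1^0(x,\xi)=0 \Rightarrow \partial_\xi \mathfrak{p}_1^0(x,\xi)\neq 0$ for every $(x,\xi)$ with $\xi\neq 0$, and for this I would split into the cases $p<1$ and $p>1$ (the value $p=1$ being excluded by hypothesis).

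First I would dispose of the case $p<1$. By Thm.~\ref{thm:pjell}, together with the standing assumption $|\nabla\tilde u_1|\neq 0$, the operator $P_1$ is elliptic, i.e.\ $\mathfrak{p}_1^0(x,\xi)\neq 0$ on $T^*\Omega\setminus 0$. Hence the hypothesis of the implication is never satisfied and there is nothing to prove. This reduces matters to the case $p>1$, where genuine characteristic points exist.

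For $p>1$, fix $x\in\Omega$ and abbreviate $m:=\nabla\tilde u_1(x)$, a fixed nonzero vector. Differentiating \eqref{eq:principalP1} in $\xi$ (treating the $x$-dependent factors as constants) and simplifying with the quotient rule gives
\[
\partial_\xi \mathfrak{p}_1^0(x,\xi) = -\,\frac{2p\,|m|^{p-2}(m\cdot\xi)}{|\xi|^2}\left( m - \frac{m\cdot\xi}{|\xi|^2}\,\xi \right),
\]
so that $\partial_\xi \mathfrak{p}_1^0$ equals, up to the scalar prefactor $-2p|m|^{p-2}(m\cdot\xi)/|\xi|^2$, the orthogonal projection of $m$ onto the hyperplane $\xi^\perp$. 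Now suppose $(x,\xi)$ is a characteristic point, $\xi\neq 0$; this is equivalent to $(m\cdot\xi)^2 = p^{-1}|m|^2|\xi|^2$. Since $|m|\neq 0$ and $\xi\neq 0$, the right-hand side is strictly positive, so $m\cdot\xi\neq 0$ and the scalar prefactor does not vanish. Moreover the projection $m - (m\cdot\xi)|\xi|^{-2}\xi$ vanishes exactly when $m\parallel\xi$, in which case $(m\cdot\xi)^2 = |m|^2|\xi|^2$, forcing $p^{-1}=1$, i.e.\ $p=1$ — contradicting $p\neq 1$. Hence both factors are nonzero, $\partial_\xi\mathfrak{p}_1^0(x,\xi)\neq 0$, and the verification is complete.

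The only step requiring any care is the last one: one must ensure that neither factor in the product expression for $\partial_\xi\mathfrak{p}_1^0$ degenerates at a characteristic point, and this is precisely where the two hypotheses $|\nabla\tilde u_1|\neq 0$ and $p\neq 1$ enter. Everything else is routine differentiation together with the case distinction furnished by Thm.~\ref{thm:pjell}.
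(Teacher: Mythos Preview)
Your argument is correct and follows essentially the same route as the paper's proof: dispose of $p<1$ by ellipticity, then at a characteristic point use that $(\nabla\tilde u_1\cdot\xi)^2=p^{-1}|\nabla\tilde u_1|^2|\xi|^2$ forces $\nabla\tilde u_1$ and $\xi$ to be non-parallel when $p\neq1$, which makes $\partial_\xi\mathfrak p_1^0$ nonvanishing. The only cosmetic difference is that the paper substitutes the characteristic relation into the derivative before arguing, obtaining $\tfrac{2p|\nabla\tilde u_1|^p}{|\xi|^2}\bigl(\tfrac1p\xi-\tfrac{\nabla\tilde u_1\cdot\xi}{|\nabla\tilde u_1|^2}\nabla\tilde u_1\bigr)$, whereas you keep the unreduced form and read it as a scalar times the orthogonal projection of $\nabla\tilde u_1$ onto $\xi^\perp$; both expressions vanish precisely when $\xi\parallel\nabla\tilde u_1$, so the conclusion is identical.
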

\begin{proof}
Elliptic operators are trivially of principal type, so it suffices to consider the case $p\geq1$. Clearly $\mathfrak{p}_1^0$ is real and we find that
\begin{gather}\label{eq:nonparallel}
\mathfrak{p}_1^0(x,\xi) = 0 \Leftrightarrow \frac{(\nabla \tilde{u}_1\cdot \xi)^2}{\vert \nabla \tilde{u}_1 \vert^2 \vert \xi \vert ^2} = \frac{1}{p},
\end{gather}
and therefore
\begin{gather}
\left.\frac{\partial}{\partial \xi} \mathfrak{p}_1^0(x,\xi) \right\vert_{\mathfrak{p}_1^0=0} = 2p\frac{\vert \nabla \tilde{u}_1 \vert^p}{|\xi|^2} \left(\frac{1}{p} \xi -\frac{\nabla \tilde{u}_1 \cdot \xi}{\vert \nabla \tilde{u}_1 \vert^2} \nabla \tilde{u}_1\right).
\end{gather}
This means that $\mathfrak{p}_1^0$ is of real principal type if, and only if, $\mathfrak{p}_1^0(x,\xi) = 0$ implies
\begin{equation}\label{eq:inequalP1}
\frac{1}{p} \xi -\frac{\nabla \tilde{u}_1 \cdot \xi}{\vert \nabla \tilde{u}_1 \vert^2} \nabla \tilde{u}_1 \neq 0.
\end{equation}
When $p=1$,  the vectors $\xi$ and $\nabla \tilde{u}_j$ are parallel when \eqref{eq:nonparallel} holds and one finds that \eqref{eq:inequalP1} cannot be satisfied. When $p> 1$, the non-zero vectors $\nabla \tilde{u}_1$ and $\xi$ cannot be parallel, and this is sufficient to conclude that the inequality is satisfied.
\end{proof}
The classical theorem on propagation of singularities for pseudo-differential operators of real principal type, which is due to Duistermaat and Hörmander~\cite{duistermaat1972fourier}, explains how singularities propagate along the bicharacteristics. In the following theorem we show that the bicharacteristic curves are perpendicular to the direction in which ellipticity is lost.
\begin{thm}\label{thm:aj1propbi}
For $p>1$, the bicharacteristic curves of $P_1$ are perpendicular to the directions in which ellipticity is lost.  
\end{thm}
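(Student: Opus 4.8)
The plan is to invoke the classical Duistermaat--H\"ormander propagation theorem in the form: for an operator of real principal type, $\mathrm{WF}(\delta\sigma)\setminus\mathrm{WF}(H_1-\tilde H_1)$ is invariant under the bicharacteristic flow of $P_1$, a bicharacteristic being an integral curve of the Hamiltonian vector field $H_{\mathfrak p_1^0}=\sum_i\big(\partial_{\xi_i}\mathfrak p_1^0\,\partial_{x_i}-\partial_{x_i}\mathfrak p_1^0\,\partial_{\xi_i}\big)$ contained in the characteristic set $\mathrm{Char}(P_1)=\{\mathfrak p_1^0=0\}$. By Lemma~\ref{lem:qrpt} this framework is available precisely because $p>1$. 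Along such a curve $t\mapsto(x(t),\xi(t))$ one has $\dot x(t)=\nabla_\xi\mathfrak p_1^0(x(t),\xi(t))$, so the associated curve of singularities in $\Omega$ is tangent to $\nabla_\xi\mathfrak p_1^0$. By the discussion following Theorem~\ref{thm:pjell}, the statement that $P_1$ loses ellipticity ``in the direction $\xi$ at $x$'' means exactly $(x,\xi)\in\mathrm{Char}(P_1)$; identifying covectors with vectors through the Euclidean structure, the theorem therefore reduces to the pointwise identity $\nabla_\xi\mathfrak p_1^0(x,\xi)\cdot\xi=0$ for every $(x,\xi)\in\mathrm{Char}(P_1)$.

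To obtain this I would reuse the computation already carried out in the proof of Lemma~\ref{lem:qrpt}, namely
\[
\nabla_\xi\mathfrak p_1^0(x,\xi)\big|_{\mathfrak p_1^0=0}=\frac{2p\,|\nabla\tilde u_1|^p}{|\xi|^2}\left(\frac1p\,\xi-\frac{\nabla\tilde u_1\cdot\xi}{|\nabla\tilde u_1|^2}\,\nabla\tilde u_1\right),
\]
take the Euclidean inner product with $\xi$, and simplify to
\[
\nabla_\xi\mathfrak p_1^0(x,\xi)\cdot\xi=\frac{2p\,|\nabla\tilde u_1|^p}{|\xi|^2}\left(\frac{|\xi|^2}{p}-\frac{(\nabla\tilde u_1\cdot\xi)^2}{|\nabla\tilde u_1|^2}\right).
\]
Inserting the characteristic relation $(\nabla\tilde u_1\cdot\xi)^2=|\nabla\tilde u_1|^2|\xi|^2/p$ from \eqref{eq:nonparallel} makes the parenthesis vanish, giving $\nabla_\xi\mathfrak p_1^0\perp\xi$ on $\mathrm{Char}(P_1)$. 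Since $p>1$ guarantees, through real principal type (Lemma~\ref{lem:qrpt}), that $\nabla_\xi\mathfrak p_1^0\neq0$ on the characteristic set, the perpendicularity assertion is not vacuous and the proof is complete.

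The computation itself is short; the only points that need care are conceptual. One must be explicit that ``the direction in which ellipticity is lost'' is the covector $\xi$ singled out in Theorem~\ref{thm:pjell} (a rotation of $\nabla\tilde u_1$ by the angle $\alpha_1$ with $\cos\alpha_1=1/\sqrt p$), not $\nabla\tilde u_1$ itself, and that ``perpendicular'' is understood with respect to the flat metric used to identify $T^*_x\Omega$ with $\mathbb R^n$. One should also record why the hypothesis is $p>1$ and not merely $p\geq1$: at $p=1$ the characteristic condition forces $\xi$ to be parallel to $\nabla\tilde u_1$, Lemma~\ref{lem:qrpt} fails, and the Duistermaat--H\"ormander bicharacteristic picture underlying the statement is no longer available, so ``bicharacteristic curve of $P_1$'' is only meaningful here for $p>1$. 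This last observation is really the main thing to get right; once the setting is fixed the inner-product identity is immediate.
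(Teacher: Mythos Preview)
Your argument is correct and follows essentially the same route as the paper: both set up the Hamiltonian system, identify $\dot x(t)=\nabla_\xi\mathfrak p_1^0$, and verify $\dot x\cdot\xi=0$ on the characteristic set. The only cosmetic difference is that the paper writes out the unrestricted expression for $\nabla_\xi\mathfrak p_1^0$ and observes that the dot product with $\xi$ vanishes identically (as it must, $\mathfrak p_1^0$ being homogeneous of degree zero in $\xi$), whereas you first substitute the characteristic relation from Lemma~\ref{lem:qrpt} and then substitute it once more after pairing with $\xi$; the content is the same.
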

\begin{proof}
The bicharacteristic strips $(x(t),\xi(t))\in T^*\Omega\setminus 0, t \in \mathbb{R}$ are integral curves of the system of equations
\begin{equation}
\left\{\begin{aligned}
 \frac{dx(t)}{dt} &= \frac{\partial}{\partial \xi} \mathfrak{p}_1^0(x(t)),\xi(t)), \\
\frac{d\xi(t)}{dt} &= -\frac{\partial}{\partial x} \mathfrak{p}_1^0(x(t)),\xi(t)),
\end{aligned}\right.  
\end{equation}
where $\mathfrak{p}_1^0(x(t)),\xi(t))=0$~\cite{egorov1993microlocal}. We find that
\begin{gather}
\frac{dx(t)}{dt} = 2p\frac{\vert \nabla \tilde{u}_1 \vert^p}{|\xi(t)|^2} \left(\frac{(\nabla \tilde{u}_1(x(t))\cdot \xi(t))^2}{\vert \nabla \tilde{u}_1(x(t)) \vert^2 \vert \xi(t) \vert ^2} \xi(t) -\frac{\nabla \tilde{u}_1(x(t)) \cdot \xi(t)}{\vert \nabla \tilde{u}_1(x(t)) \vert^2} \nabla \tilde{u}_1(x(t))\right).
\end{gather}
Note that $\frac{dx(t)}{dt} \cdot \xi(t)=0$, which means that $x(t)$ is a curve perpendicular to the direction in which ellipticity is lost.
\end{proof}
This theorem implies that singularities can only propagate in directions perpendicular to the direction in which ellipticity is lost.

\subsubsection{The normal operator}\label{sec:propnormaloperator}
For the analysis of propagation of singularities for the normal operator we restrict ourselves to the  situation with two measurements ($J=2$) in two dimensions ($n=2$). If one had a single measurement ($J=1$) the normal formulation would not need to be introduced and if more than two measurements were available ($J>2$), then one can ensure that the normal operator is elliptic by a specific choice of boundary conditions. This was also discussed briefly in Sec.~\ref{sec:classifi}. When $J=2$, the normal formulation is the equation
\begin{gather}\label{eq:lsj2}
(P_1^*P_1+P_2^*P_2) \delta \sigma = P_1^* (H_1-\tilde{H}_1) + P_2^* (H_2-\tilde{H}_2).
\end{gather}
The propagation of singularities depends on the symbol of $P_1^*P_1+P_2^*P_2$. The normal form increases the order of the zeros of the principal symbol, as seen by~\eqref{eq:prinsymls}. As a result, the principal symbol of $P_1^*P_1+P_2^*P_2$ is \emph{not} of principal type, and the classical propagation theory does not provide any information on the propagation of singularities. Instead we are going to use the theory for principal symbols of constant multiplicity following ~\cite{chazarain1976reflection}: 
\begin{definition}
The symbol $\mathfrak{q}(x,\xi)$ has constant multiplicity, $s \in \mathbb{N}$, at $(x_0,\xi_0)\in T^*\Omega\setminus 0$ if there exists a symbol $\acute{\mathfrak{q}}(x,\xi)$ of real principal type such that
\begin{gather}
\mathfrak{q}(x,\xi) = [\acute{\mathfrak{q}}(x,\xi)]^s,
\end{gather} 
in a neighbourhood of $(x_0,\xi_0)$
\end{definition}
In the classical theory on the propagation of singularities, operators of real principal type are characterised by the fact that the propagation of singularities is completely determined by the principal symbol. For operators with a principal symbol of constant multiplicity, a similar characterisation can be given if the operator satisfies the \emph{Levi condition}; that is a condition on the lower order terms. 
In this text we will not discuss the actual theory behind this condition, but just utilize its implications for operators of constant multiplicity. The following theorem of propagation of singularities for pseudo-differential operators with symbols having constant multiplicity and satisfying the Levi condition is due to Chazarain~\cite{chazarain1974propagation}:
\begin{thm}\label{thm:chazarain}
Let $P$ be a pseudo-differential operator of order $m$ defined in the smooth open set $X$, with principal symbol $\mathfrak{p}^m$ of constant multiplicity satisfying the Levi condition. For $u \in \mathcal{D}'(X)$, let $Pu =f$, then $\text{WF}(u)\setminus \text{WF}(f)$ is contained in the set of points $(x_0,\xi_0)$ for which $\mathfrak{p}^m(x_0,\xi_0)=(0)$ and invariant under the bicharacteristic strips relative to the symbol $\acute{\mathfrak{p}}$.
\end{thm}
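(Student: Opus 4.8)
The plan is to treat the elliptic and the characteristic parts of the problem separately. Where $\mathfrak{p}^m(x_0,\xi_0)\neq 0$ the operator $P$ is microlocally elliptic, so a microlocal parametrix $E_0$ with $E_0P\equiv\mathrm{Id}$ near $(x_0,\xi_0)$ shows $(x_0,\xi_0)\notin\mathrm{WF}(u)$ whenever $(x_0,\xi_0)\notin\mathrm{WF}(Pu)=\mathrm{WF}(f)$; this uses only that $P$ is a classical ps.d.o.\ and already gives the inclusion $\mathrm{WF}(u)\setminus\mathrm{WF}(f)\subset\{\mathfrak{p}^m=0\}$. It remains to show that over the characteristic set $\mathrm{WF}(u)\setminus\mathrm{WF}(f)$ is invariant under the bicharacteristic strips of $\acute{\mathfrak{p}}$, which is the substance of the theorem; the rest of the argument is local near a fixed $\rho_0=(x_0,\xi_0)$ with $\acute{\mathfrak{p}}(\rho_0)=0$.

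First I would reduce $P$ to a microlocal normal form. Since $\acute{\mathfrak{p}}$ is of real principal type, $d\acute{\mathfrak{p}}(\rho_0)\neq 0$ and its Hamilton field is non-radial, so by the Duistermaat--Hörmander normal form \cite{duistermaat1972fourier} there is a homogeneous canonical transformation $\chi$ from a conic neighbourhood of $\rho_0$ onto a conic neighbourhood of a point of $\{\xi_1=0\}$ taking $\acute{\mathfrak{p}}$ to $\xi_1$ and hence the Hamilton flow of $\acute{\mathfrak{p}}$ to translation in $x_1$. Quantising $\chi$ by elliptic Fourier integral operators $F,G$ with $GF\equiv\mathrm{Id}$, $FG\equiv\mathrm{Id}$ microlocally, and multiplying by an appropriate elliptic ps.d.o., one obtains that $FPG$ is microlocally a ps.d.o.\ with principal symbol $\xi_1^s$ (this is where the hypothesis that $\mathfrak{p}^m$ has constant multiplicity $s$, i.e.\ $\mathfrak{p}^m=(\acute{\mathfrak{p}})^s$, enters). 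The key point is that the Levi condition is invariant under multiplication by elliptic ps.d.o.'s and under conjugation by elliptic Fourier integral operators — it is a condition on the vanishing orders of the lower homogeneous parts of the symbol on the characteristic variety, and these transform covariantly — so $FPG$ still satisfies it; concretely, $FPG$ is microlocally of the form $D_1^s+\sum_{j=0}^{s-1}Q_jD_1^j$ with each $Q_j$ a pseudo-differential operator in $x'$ of order $\le 0$ depending smoothly on $x_1$, which is exactly what the Levi condition says in these coordinates.

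Next I would build a microlocal parametrix for the model operator $\widetilde P:=D_1^s+\sum_{j<s}Q_jD_1^j$. Because every $Q_j$ is of order $\le 0$, one factors $\widetilde P\equiv(D_1-R_s)\cdots(D_1-R_1)$ modulo a smoothing operator, with each $R_k$ a ps.d.o.\ in $x'$ of order $\le 0$ depending smoothly on $x_1$ (matching symbols order by order; the commutator corrections stay of order $\le 0$ precisely because the $Q_j$ do). For each factor, $(D_1-R_k)v=g$ is a first-order ODE in $x_1$ with pseudo-differential coefficient, solved by integration in $x_1$; its forward solution operator $E_k$ is a Fourier integral operator whose canonical relation is contained in $\Delta\cup\Lambda^+$, where $\Lambda^+=\{x'=y',\ \xi'=\eta',\ \xi_1=\eta_1=0,\ x_1>y_1\}$ is the forward flowout, and since $R_k$ has order $\le 0$ no Sobolev regularity is lost. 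Composing, $E:=E_1\cdots E_s$ satisfies $\widetilde P E\equiv\mathrm{Id}$ microlocally with $\mathrm{WF}'(E)\subset\Delta\cup\Lambda^+$, so $u=E\widetilde P u+(\text{smoothing})u$ and $\mathrm{WF}(u)\subset\mathrm{WF}(\widetilde P u)\cup(\Lambda^+\circ\mathrm{WF}(\widetilde P u))$; repeating with the backward solution operators gives the two-sided conclusion that $\mathrm{WF}(u)\setminus\mathrm{WF}(\widetilde P u)$ lies in $\{\xi_1=0\}$ and is invariant under $x_1$-translation. Transporting this back through $F$ and $G$ and invoking Egorov's theorem turns $\{\xi_1=0\}$ into $\{\acute{\mathfrak{p}}=0\}$ and $x_1$-translation into the bicharacteristic flow of $\acute{\mathfrak{p}}$; since Fourier integral operators move wavefront sets by their canonical relations and the elliptic factors leave them unchanged, this yields the assertion near $\rho_0$, and a covering argument over characteristic points finishes the proof.

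I expect the main obstacle to be the factorisation $\widetilde P\equiv(D_1-R_s)\cdots(D_1-R_1)$ with all $R_k$ of order $\le 0$: this is the single place where the Levi condition does genuine work, and one must check that it is exactly strong enough to keep every $R_k$, and every commutator correction produced in the factorisation, of nonpositive order — so that the parametrix flows out only along the bicharacteristics of $\acute{\mathfrak{p}}$, neither spreading singularities off $\{\xi_1=0\}$ nor degrading the Sobolev order at each of the $s$ sheets. A secondary technical point, routine but to be carried out carefully in local symbol coordinates, is the invariance of the Levi condition under the elliptic multiplications and Fourier integral conjugations used in the normal-form reduction.
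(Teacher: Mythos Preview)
The paper does not prove this statement: Theorem~\ref{thm:chazarain} is quoted from Chazarain~\cite{chazarain1974propagation} as a black-box result and no argument is given. So there is no ``paper's own proof'' to compare against; the authors simply invoke the theorem and then verify, in the surrounding lemmas, that their specific operator $P_1^*P_1+P_2^*P_2$ has constant multiplicity and satisfies the Levi condition so that Chazarain's theorem applies.

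Your sketch, by contrast, is an outline of how one would actually \emph{prove} Chazarain's theorem. The strategy you describe --- microlocal ellipticity away from the characteristic set, Duistermaat--H\"ormander reduction of $\acute{\mathfrak{p}}$ to $\xi_1$, hence of $\mathfrak{p}^m$ to $\xi_1^s$, then using the Levi condition to factor the model operator as $\prod_k(D_1-R_k)$ with each $R_k$ of order $\le 0$, and finally propagating along $x_1$ for each first-order factor --- is the standard route and is essentially how the result is established in the literature. Your identification of the crux (that the Levi condition is precisely what keeps every $R_k$ and every commutator correction of nonpositive order) is correct, as is the secondary point about invariance of the Levi condition under elliptic multiplication and FIO conjugation. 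So the proposal is sound as a proof sketch of the cited theorem, but it goes well beyond what the paper itself does, which is nothing more than a citation.
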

We first show that the operator $P_1^*P_1+P_2^*P_2$ can be decomposed in the form of a polynomial of an operator of real principal type. Then we show that this implies that $P_1^*P_1+P_2^*P_2$ is of constant multiplicity and that it satisfies the Levi condition. We then calculate the bicharacteristic curves along which the singularities can propagate.
\begin{lem}
Let $p>1$, $\Omega \subset \mathbb{R}^2$ and let $P_1^*P_1+P_2^*P_2$ be the pseudo-differential operator from the scalar normal equation~\eqref{eq:lsj2}. Assume that ellipticity is lost in the direction $v(x) \in \mathbb{R}^2, |v(x)| = 1$, in a neighbourhood of a point $x_0\in\Omega$. Define by $v^T(x)$ a unit vector perpendicular to $v(x)$. Then in a neighbourhood of $(x_0,v(x_0))$, the pseudo-differential operator $P_1^*P_1+P_2^*P_2$ can be factorised, such that
\begin{gather}
P_1^*P_1+P_2^*P_2 = (EQ)^2+FQ+G,
\end{gather}
where $E\in \Psi^{-1}(\Omega)$ is elliptic, $Q\in \Psi^{1}(\Omega)$ is an operator with principal symbol $\xi \cdot v^T(x)$ and $F,G \in \Psi^{-2} (\Omega)$. 
\end{lem}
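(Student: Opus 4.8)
The plan is to reduce the statement, after a routine factorisation of the principal symbol, to a single divisibility fact about sub-principal symbols — in effect the Levi condition — which will follow from the structure of $P_1^*P_1+P_2^*P_2$ as a sum of self-adjoint squares. First I would factorise the principal symbol. Write $R:=P_1^*P_1+P_2^*P_2$. By Theorem~\ref{thm:pj2ell}, the hypothesis that ellipticity is lost at $(x_0,v(x_0))$ means that the principal symbol $\mathfrak r^0=(\mathfrak p_1^0)^2+(\mathfrak p_2^0)^2$ of $R$ vanishes there, and, being a sum of squares, each $\mathfrak p_j^0$ vanishes at $(x_0,v(x_0))$, and by the hypothesis on the entire cone $\{(x,tv(x)):t>0,\ x\ \text{near}\ x_0\}$. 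By the computation in the proof of Lemma~\ref{lem:qrpt}, for $p>1$ we have $\partial_\xi\mathfrak p_j^0\neq0$ at a characteristic direction, so each $\mathfrak p_j^0$ vanishes there to exactly first order. Since $n=2$, the cone above is the hypersurface $\Sigma:=\{(x,\xi):\xi\cdot v^T(x)=0\}$, for which $(\xi\cdot v^T(x))/|\xi|$ is a homogeneous defining function; hence near $(x_0,v(x_0))$ one can write $\mathfrak p_j^0(x,\xi)=k_j(x,\xi)\,(\xi\cdot v^T(x))/|\xi|$ with $k_j\in S^0(\Omega)$ non-vanishing, so that
\[
\mathfrak r^0=e(x,\xi)^2\,(\xi\cdot v^T(x))^2,\qquad e:=|\xi|^{-1}\bigl(k_1^2+k_2^2\bigr)^{1/2}\in S^{-1}(\Omega),
\]
with $e$ elliptic near $(x_0,v(x_0))$.

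Next I would lift this to operators. Let $Q\in\Psi^1(\Omega)$ be a formally self-adjoint first order differential operator with principal symbol $\xi\cdot v^T(x)$, e.g.\ $Q=\frac{1}{2i}\bigl(v^T\!\cdot\nabla+\nabla\!\cdot(v^T\,\cdot\,)\bigr)$, and let $E\in\Psi^{-1}(\Omega)$ be a properly supported classical elliptic operator whose principal symbol coincides with $e$ near $(x_0,v(x_0))$, its subleading terms being left free for the moment. Then $(EQ)^2\in\Psi^0(\Omega)$ has principal symbol $(e\,\xi\cdot v^T)^2=\mathfrak r^0$, so $A:=R-(EQ)^2$ lies in $\Psi^{-1}(\Omega)$ microlocally near $(x_0,v(x_0))$. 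To conclude it is enough to show that the order $-1$ symbol $\mathfrak a^{-1}$ of $A$ is divisible by $\xi\cdot v^T(x)$ near $(x_0,v(x_0))$: one then takes $F\in\Psi^{-2}(\Omega)$ with principal symbol $\mathfrak a^{-1}/(\xi\cdot v^T)\in S^{-2}$ and sets $G:=A-FQ\in\Psi^{-2}(\Omega)$, which gives the factorisation $R=(EQ)^2+FQ+G$ microlocally near $(x_0,v(x_0))$, with $E,Q,F,G$ as claimed.

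The hard part is this divisibility, and I would prove it by showing that $\mathfrak a^{-1}$ vanishes on $\Sigma$. Since $R$ and $(EQ)^2$ share the principal symbol $\mathfrak r^0$, the terms $\frac{1}{2i}\sum_k\partial_{x_k}\partial_{\xi_k}\mathfrak r^0$ that appear when passing from the $(-1)$-homogeneous symbol parts to the sub-principal symbols cancel in the difference, and $\mathfrak a^{-1}=\mathfrak p^{\mathrm{sub}}(R)-\mathfrak p^{\mathrm{sub}}\!\bigl((EQ)^2\bigr)$, where $\mathfrak p^{\mathrm{sub}}:=\mathfrak p^{m-1}-\frac{1}{2i}\sum_k\partial_{x_k}\partial_{\xi_k}\mathfrak p^m$ is the sub-principal symbol. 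By the standard composition rule for sub-principal symbols, $\mathfrak p^{\mathrm{sub}}((EQ)^2)=2\,(e\,\xi\cdot v^T)\,\mathfrak p^{\mathrm{sub}}(EQ)$ — the Poisson bracket term $\{e\,\xi\cdot v^T,\,e\,\xi\cdot v^T\}$ vanishing — which vanishes on $\Sigma$. For $R$ the point is that it is a sum of formally self-adjoint squares: using the composition and adjoint rules together with the reality of $\mathfrak p_j^0$ (so the principal symbol of $P_j^*$ is $\mathfrak p_j^0$ and $\{\mathfrak p_j^0,\mathfrak p_j^0\}=0$) one finds $\mathfrak p^{\mathrm{sub}}(P_j^*P_j)=2\,\mathfrak p_j^0\,\operatorname{Re}\mathfrak p^{\mathrm{sub}}(P_j)$, so $\mathfrak p^{\mathrm{sub}}(R)=\sum_j\mathfrak p^{\mathrm{sub}}(P_j^*P_j)$ vanishes wherever both $\mathfrak p_j^0$ do, in particular on $\Sigma$ near $(x_0,v(x_0))$ by the first step. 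Hence $\mathfrak a^{-1}$ vanishes on $\Sigma$, as needed. (The unspecified subleading symbol of $E$ contributes to $\mathfrak a^{-1}$ only through a multiple of $(\xi\cdot v^T)^2$, so it is irrelevant here and may be fixed arbitrarily.)

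I expect the delicate point to be precisely this last step: the cancellations hinge on getting the normalisations right — the factors of $i$ and $\frac{1}{2}$ in the composition and adjoint formulas for sub-principal symbols — and on keeping the argument microlocal, since the factorisation $\mathfrak r^0=e^2(\xi\cdot v^T)^2$ holds only near $(x_0,v(x_0))$; thus $A\in\Psi^{-1}$, the divisibility, and the factorisation itself hold only modulo operators that are smoothing near $(x_0,v(x_0))$, which is exactly what is needed for the subsequent application of Theorem~\ref{thm:chazarain}. If one wishes to avoid the sub-principal-symbol machinery, the same divisibility can be checked by a direct but substantially longer symbol computation, which additionally requires the first correction term in the symbol expansion of the parametrix $L_{\tilde\sigma}^{-1}$ appearing in~\eqref{eq:pj}.
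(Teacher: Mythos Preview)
Your proposal is correct and follows the same overall strategy as the paper---factor each $\mathfrak p_j^0$ as an elliptic symbol times $\xi\cdot v^T$, then show the order $-1$ part of $R$ is again divisible by $\xi\cdot v^T$---but the technical execution differs. The paper works concretely: it expands $\mathfrak p_j^0$ in the orthonormal basis $\{v,v^T\}$, using $(\nabla\tilde u_j\cdot v)^2=\tfrac1p|\nabla\tilde u_j|^2$ and $(\nabla\tilde u_j\cdot v^T)^2=(1-\tfrac1p)|\nabla\tilde u_j|^2$, to obtain an explicit formula $\mathfrak p_j^0=\mathfrak e_j\,(v^T\!\cdot\xi)$ with $\mathfrak e_j\in S^{-1}$ non-vanishing near $(x_0,v(x_0))$; it then writes the symbol of $R$ as $|\mathfrak p_1|^2+|\mathfrak p_2|^2$ and reads off the order $-1$ term as $2\sum_j\mathfrak p_j^0\,\mathrm{Re}(\mathfrak p_j^{-1})=\mathfrak f\,(v^T\!\cdot\xi)$ directly. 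Your argument replaces this explicit computation by two abstract steps: first-order vanishing on $\Sigma$ gives the principal factorisation, and the invariant sub-principal symbol calculus (with $\mathfrak p^{\mathrm{sub}}(P_j^*P_j)=2\mathfrak p_j^0\,\mathrm{Re}\,\mathfrak p^{\mathrm{sub}}(P_j)$) gives the divisibility. Your route is arguably more careful---it sidesteps the paper's somewhat loose identification of the full symbol of $P_j^*P_j$ with $|\mathfrak p_j|^2$, which ignores composition corrections---while the paper's route is shorter and yields concrete expressions for $\mathfrak e_j$ and $\mathfrak e$.
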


\begin{proof}
The full symbol of $P_1^*P_1+P_2^*P_2$ is given by
\begin{align}
|\mathfrak{p}_1|^2 + |\mathfrak{p}_2|^2 &= |\mathfrak{p}^0_1+ \mathfrak{p}^{-1}_1+\mathcal{O}(|\xi|^{-2})|^2+ |\mathfrak{p}^0_2+ \mathfrak{p}^{-1}_2+\mathcal{O}(|\xi|^{-2})|^2.
\end{align}
Since the principal symbols $\mathfrak{p}^0_j$ are real, we find that
\begin{align}
|\mathfrak{p}_1|^2 + |\mathfrak{p}_2|^2 &= (\mathfrak{p}^0_1)^2+(\mathfrak{p}^0_2)^2+2\left(\mathfrak{p}_1^0\text{Re}(\mathfrak{p}^{-1}_1)+\mathfrak{p}_2^0\text{Re}(\mathfrak{p}^{-1}_2)\right)+\mathcal{O}(|\xi|^{-2}).
\end{align}
By the definition of $v$ we know that
\begin{gather}
 (\nabla \tilde{u}_j \cdot v)^2 = \frac{1}{p} |\nabla \tilde{u}_j|^2\quad \text{and}\quad 
  (\nabla \tilde{u}_j \cdot v^T)^2 = \left(1 - \frac{1}{p}\right) |\nabla \tilde{u}_j|^2.
\end{gather}
Since $\{v,v^T\} = \{v(x),v(x)^T\}$ is an orthonormal basis for $\mathbb{R}^2$ we have
\begin{align}
|\xi|^2 &= (\xi \cdot v)^2 + (\xi \cdot v^T)^2,\\
(\nabla \tilde{u}_j \cdot \xi)^2& = (\nabla \tilde{u}_j \cdot v)^2(v \cdot \xi)^2+(\nabla \tilde{u}_j \cdot v^T)^2(v^T \cdot \xi)^2\\&+2(\nabla \tilde{u}_j \cdot v)(v \cdot \xi)(\nabla \tilde{u}_j \cdot v^T)(v^T \cdot \xi).
\end{align}
This makes it possible to write the principal symbol of $P_j$ as
\begin{align}
\mathfrak{p}^0_j &= \vert \nabla \tilde{u}_j \vert^p \left(  1 - p \frac{ (\nabla \tilde{u}_j \cdot \xi)^2}{\vert \nabla \tilde{u}_j \vert^{2}|\xi| ^2} \right)\\
 &= \frac{\vert \nabla \tilde{u}_j \vert^p}{|\xi| ^2}\left( (2-p)(v^T \cdot \xi) ^2 - \frac{2p}{|\nabla \tilde{u}_j |^2}(\nabla \tilde{u}_j \cdot v)(v \cdot \xi)(\nabla \tilde{u}_j \cdot v^T)(v^T \cdot \xi)\right)\\
  &=  \mathfrak{e}_j(v^T \cdot \xi).
\end{align}
Here $\mathfrak{e}_j \in S^{-1}(\Omega)$ is non-zero in a neighbourhood of $(x_0,v(x_0))$, because for $p>1$ neither $(\nabla \tilde{u}_j \cdot v)$ nor $(\nabla \tilde{u}_j \cdot v^T)$ vanishes in such a neighbourhood. The principal symbol of $P_1^*P_1+P_2^*P_2$ then takes the simple form
\begin{align}
(\mathfrak{p}^0_1)^2+(\mathfrak{p}^0_2)^2 &= (\mathfrak{e}_1^2 + \mathfrak{e}_2^2)(v^T \cdot \xi)^2\\
&= \mathfrak{e}^2 (v^T \cdot \xi)^2,
\end{align}
where $\mathfrak{e} \in S^{-1}(\Omega)$ is also non-zero in a neighbourhood of $(x_0,v(x_0))$. In a similar way we can write
\begin{align}
2\left(\mathfrak{p}_1^0\text{Re}(\mathfrak{p}^{-1}_1)+\mathfrak{p}_2^0\text{Re}(\mathfrak{p}^{-1}_2)\right) &= 2\left(\mathfrak{e}_1\text{Re}(\mathfrak{p}^{-1}_1)+\mathfrak{e}_2\text{Re}(\mathfrak{p}^{-1}_2)\right)(v^T \cdot \xi)\\ &= \mathfrak{f}(v^T \cdot \xi),
\end{align}
where $\mathfrak{f} \in S^{-2}(\Omega)$. This shows that
\begin{gather}
|\mathfrak{p}_1|^2+|\mathfrak{p}_2|^2=\mathfrak{e}^2 (v^T \cdot \xi)^2 + \mathfrak{f}(v^T \cdot \xi) +\mathfrak{g} ,
\end{gather}
where $\mathfrak{g} \in S^{-2}(\Omega)$. It follows that
$P_1^*P_1+P_2^*P_2$ in a neighbourhood of $(x_0,v(x_0))$ can be written in the form
\begin{gather}
P_1^*P_1+P_2^*P_2 = (EQ)^2+FQ+G,
\end{gather}
where $E\in \Psi^{-1}(\Omega)$ is elliptic, $Q\in \Psi^{1}(\Omega)$ is an operator with principal symbol $\xi \cdot v^T$ and $F,G \in \Psi^{-2} (\Omega)$.
\end{proof}
The previous lemma implies that the operator $P_1^*P_1+P_2^*P_2$ is an operator of constant multiplicity because its principal symbol can be written in the form $\left(\mathfrak{e}\left(\xi\cdot v^T\right)\right)^2$, with $\mathfrak{e} \in S^{-1}(\Omega)$. This type of factorisation of $P_1^*P_1+P_2^*P_2$ also implies that the Levi condition is satisfied~\cite[Thm. 2.1]{chazarain1974propagation}. In turns out that the bicharacteristic curves are similar to those of both $P_1$ and $P_2$, as the following theorem shows.
\begin{thm}
For $p>1$, the bicharacteristic curves of $EQ$ are perpendicular to the directions in which ellipticity is lost.
\end{thm}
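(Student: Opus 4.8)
The plan is to combine the factorisation established in the preceding lemma with Chazarain's theorem, Thm.~\ref{thm:chazarain}. By that lemma, in a neighbourhood of $(x_0,v(x_0))$ we have $P_1^*P_1+P_2^*P_2=(EQ)^2+FQ+G$ with $E\in\Psi^{-1}(\Omega)$ elliptic, $Q\in\Psi^1(\Omega)$ of principal symbol $\xi\cdot v^T(x)$, and $F,G\in\Psi^{-2}(\Omega)$; hence $EQ\in\Psi^0(\Omega)$ has principal symbol $\mathfrak{q}(x,\xi):=\mathfrak{e}(x,\xi)\,\big(\xi\cdot v^T(x)\big)$, where $\mathfrak{e}\in S^{-1}(\Omega)$ is the nonvanishing factor from the lemma. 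As observed after the lemma, the principal symbol of the normal operator is $\mathfrak{q}^2=\big(\mathfrak{e}\,(\xi\cdot v^T)\big)^2$, which has constant multiplicity two and satisfies the Levi condition, so Thm.~\ref{thm:chazarain} applies and shows that $\mathrm{WF}(\delta\sigma)$ can exceed the wavefront set of the right-hand side of~\eqref{eq:lsj2} only on a set invariant under the bicharacteristic strips of $\mathfrak{q}$, i.e. the bicharacteristic curves of $EQ$. It thus remains to compute those curves near $(x_0,v(x_0))$ and verify the stated perpendicularity, along the same lines as the proof of Thm.~\ref{thm:aj1propbi}.

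For this I would write down Hamilton's equations for $\mathfrak{q}$: a bicharacteristic strip $(x(t),\xi(t))\in T^*\Omega\setminus 0$ satisfies
\[
\dot x(t)=\frac{\partial}{\partial\xi}\mathfrak{q}(x(t),\xi(t)),\qquad \dot\xi(t)=-\frac{\partial}{\partial x}\mathfrak{q}(x(t),\xi(t)),
\]
and lies in the characteristic set $\mathfrak{q}(x(t),\xi(t))=0$. Differentiating the product,
\[
\frac{\partial}{\partial\xi}\mathfrak{q}(x,\xi)=\Big(\frac{\partial}{\partial\xi}\mathfrak{e}(x,\xi)\Big)\big(\xi\cdot v^T(x)\big)+\mathfrak{e}(x,\xi)\,v^T(x).
\]
Since $\mathfrak{e}$ does not vanish in a neighbourhood of $(x_0,v(x_0))$, on that neighbourhood the condition $\mathfrak{q}=0$ is equivalent to $\xi\cdot v^T(x)=0$; hence along the bicharacteristic the first term disappears and $\dot x(t)=\mathfrak{e}(x(t),\xi(t))\,v^T(x(t))$. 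Therefore $\dot x(t)$ is a nonzero scalar multiple of $v^T(x(t))$, so $\dot x(t)\cdot v(x(t))=0$ and the projected curve $x(t)$ is perpendicular to the direction $v$ in which ellipticity is lost, which is the assertion. (Equivalently, one may invoke $H_{\mathfrak{e} g}=\mathfrak{e}\,H_g+g\,H_{\mathfrak{e}}$ with $g=\xi\cdot v^T$ to see that on $\{g=0\}$ the bicharacteristics of $\mathfrak{q}$ agree, as unparametrised curves, with those of $\xi\cdot v^T$.)

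I do not anticipate a real obstacle: once the lemma is available the computation is as short as the one in Thm.~\ref{thm:aj1propbi}. The only care needed is bookkeeping — checking that $\mathfrak{q}$ is genuinely of real principal type (which follows from $\mathfrak{e}\neq0$ and $v^T\neq0$ near $(x_0,v(x_0))$, already established in the lemma, together with the computation above showing $\partial_\xi\mathfrak{q}\neq0$ on the characteristic set), confirming that the cited factorisation indeed yields the Levi condition needed to invoke Thm.~\ref{thm:chazarain}, and remembering that $v$, $v^T$, and hence the whole statement, are defined only locally near $x_0$, so the conclusion is local to the part of the characteristic set lying over that neighbourhood.
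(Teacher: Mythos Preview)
Your proposal is correct and follows essentially the same approach as the paper: compute $\partial_\xi\big(\mathfrak{e}\,(\xi\cdot v^T)\big)$ via the product rule, restrict to the characteristic set where $\xi\cdot v^T=0$ (using $\mathfrak{e}\neq 0$), and read off $\dot x=\mathfrak{e}\,v^T\perp v$. The paper's proof is just a terser version of the same calculation; your additional remarks on real principal type and the Levi condition are sound and in fact make explicit what the paper only asserts in passing.
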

\begin{proof}
The principal symbol of $EQ$ is $\mathfrak{e}(\xi\cdot v^T)$, which is clearly of real principal type. We find that the curve $x(t)$ with 
\begin{gather}
 \left.\frac{dx(t)}{dt}\right\vert_{\mathfrak{e} (\xi\cdot v^T)=0} = \left.\frac{\partial}{\partial \xi} \mathfrak{e} (\xi\cdot v^T)\right\vert_{\mathfrak{e} (\xi\cdot v^T)=0}  = \mathfrak{e} v^T
\end{gather}
clearly is perpendicular to $v$ which defines the direction in which ellipticity is lost.
\end{proof}
As a result of Thm.~\ref{thm:chazarain} we can conclude that if $P_1^*P_1+P_2^*P_2$ is not elliptic and $p>1$, a solution $\delta \sigma$ to the equation~\eqref{eq:lsj2} has singularities propagating in directions perpendicular to the direction in which ellipticity is lost.

To further investigate how singularities propagate, we need to know the wave front set of the right-hand side of the normal equation for the linearised inverse problem. The following theorem shows how the wave front sets of $P_1^* (H_1-\tilde{H}_1) + P_2^* (H_2-\tilde{H}_2)$ and $\sigma$ are related.
\begin{thm}\label{thm:wfh}
If $\{\vert \nabla u_j \vert\}_{j=1}^2$ are all bounded below by a positive constant in $\Omega$ and $\tilde{\sigma}$ is smooth, then
\begin{gather}
\operatorname{WF}(P_1^* (H_1-\tilde{H}_1) + P_2^* (H_2-\tilde{H}_2)) \subseteq \operatorname{WF}(\sigma).
\end{gather}
\end{thm}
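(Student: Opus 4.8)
The plan is to strip off the zeroth-order operators $P_j^*$ using pseudolocality, discard the reference data $\tilde H_j$ using elliptic regularity for $\tilde\sigma$, and reduce the whole claim to the microlocal regularity of the forward solutions $u_j$ relative to $\sigma$. For the first part: since $\tilde\sigma\in C_+^\infty(\bar\Omega)$, elliptic regularity gives $\tilde u_j\in C^\infty(\bar\Omega)$, so $\tilde H_j=\tilde\sigma|\nabla\tilde u_j|^p$ is smooth and $\operatorname{WF}(H_j-\tilde H_j)=\operatorname{WF}(H_j)$. By Theorem~\ref{thm:pj} the adjoints $P_j^*$ are classical, properly supported operators in $\Psi^0(\Omega)$, hence pseudolocal, so $\operatorname{WF}\!\big(P_j^*(H_j-\tilde H_j)\big)\subseteq\operatorname{WF}(H_j)$; combined with $\operatorname{WF}(v+w)\subseteq\operatorname{WF}(v)\cup\operatorname{WF}(w)$ this reduces the statement to showing $\operatorname{WF}(H_j)\subseteq\operatorname{WF}(\sigma)$ for $j=1,2$.

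Next I would treat the nonlinear data functional by writing $H_j=\sigma\,\big(|\nabla u_j|^2\big)^{p/2}$ and invoking the standard calculus of wave front sets: differentiation does not enlarge the wave front set, so $\operatorname{WF}(\nabla u_j)\subseteq\operatorname{WF}(u_j)$, and the entries $(\partial_k u_j)^2$ and their sum $|\nabla u_j|^2$ have wave front set inside $\operatorname{WF}(\nabla u_j)$. Since $|\nabla u_j|$ is bounded below by a positive constant, $|\nabla u_j|^2$ takes values in a compact subset of $(0,\infty)$ on which $t\mapsto t^{p/2}$ is smooth, and composition with a smooth function does not enlarge the wave front set; this is exactly where the lower bound is used, since for $p$ not an even integer $t\mapsto t^{p/2}$ is not smooth at the origin and a critical point of $u_j$ would otherwise manufacture singularities of $|\nabla u_j|^p$ absent from $u_j$. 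The product rule for wave front sets then yields $\operatorname{WF}(H_j)\subseteq\operatorname{WF}(\sigma)\cup\operatorname{WF}(u_j)$, so everything comes down to $\operatorname{WF}(u_j)\subseteq\operatorname{WF}(\sigma)$.

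The remaining step — and the one I expect to carry the real difficulty — is a microlocal elliptic regularity statement for the conductivity equation: the solution of $\nabla\cdot(\sigma\nabla u_j)=0$ with smooth boundary data should satisfy $\operatorname{WF}(u_j)\subseteq\operatorname{WF}(\sigma)$. Morally this is clear, because $\sigma$ is bounded above and below by positive constants, so $\nabla\cdot(\sigma\nabla\,\cdot\,)$ is uniformly elliptic and its solution operator gains two derivatives over the coefficient, whence $u_j$ cannot be more singular than $\sigma$; but turning this into a microlocal statement is delicate precisely because $\sigma$ need not be smooth, so one cannot simply quote a microlocal parametrix. I would argue as follows: fix $(x_0,\xi_0)\notin\operatorname{WF}(\sigma)$, rewrite the equation as $\Delta u_j=-\sigma^{-1}\nabla\sigma\cdot\nabla u_j$, note that $-\sigma^{-1}\nabla\sigma$ is microlocally smooth near $(x_0,\xi_0)$ (inversion of a function bounded away from zero, and differentiation, preserve microlocal smoothness), and then use a paradifferential decomposition of the equation — isolating the elliptic leading paraproduct with $\sigma$ and absorbing the remaining terms, which are either smoother or carry wave front set only in $\operatorname{WF}(\sigma)$ — together with the smooth-coefficient microlocal elliptic regularity of $\Delta$ and a bootstrap, to conclude $(x_0,\xi_0)\notin\operatorname{WF}(u_j)$. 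A more elementary route that already covers the interface-type artifacts visualised in the numerical section is to take $\sigma$ conormal to a smooth hypersurface and read off $\operatorname{WF}(u_j)\subseteq\operatorname{WF}(\sigma)$ directly from the transmission conditions $[u_j]=[\sigma\,\partial_\nu u_j]=0$.
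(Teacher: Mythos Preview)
Your argument follows exactly the paper's strategy: strip off $P_j^*$ and $\tilde H_j$ by pseudolocality and smoothness, reduce to $\operatorname{WF}(H_j)\subseteq\operatorname{WF}(\sigma)$, split this into $\operatorname{WF}(H_j)\subseteq\operatorname{WF}(\sigma)\cup\operatorname{WF}(u_j)$ and $\operatorname{WF}(u_j)\subseteq\operatorname{WF}(\sigma)$, and close with microlocal elliptic regularity for $\nabla\cdot(\sigma\nabla\,\cdot\,)$. The paper is terser at precisely the two spots you flag as delicate---it simply records $\operatorname{Char}(L_\sigma)\subseteq\operatorname{WF}(\sigma)$ and treats $v\mapsto\sigma|\nabla v|^p$ as a pseudolocal operator---so your composition-with-smooth-function argument for $|\nabla u_j|^p$ and your paradifferential sketch for the rough-coefficient elliptic regularity are in fact supplying justification that the paper leaves implicit.
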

\begin{proof}
The wave front set of a sum is at most the union of their respective wave front sets~\cite[Thm.~IX.44]{reed1975methods}. This implies
\begin{gather}
\operatorname{WF}(P_1^* (H_1-\tilde{H}_1) + P_2^* (H_2-\tilde{H}_2)) \subseteq
\operatorname{WF}(P_1^* (H_1-\tilde{H}_1)) \cup \operatorname{WF}( P_2^* (H_2-\tilde{H}_2)).
\end{gather}
Because pseudo-differential operators can at most decrease the wave front set, we have the inclusions
\begin{gather}\label{eq:proofwf1}
\operatorname{WF}(P_j^* (H_j-\tilde{H}_j))\subseteq\operatorname{WF}(H_j), \quad j\in\{1,2\}.
\end{gather}
Now, for the operator $L_\sigma := \nabla \cdot (\sigma \nabla [\cdot])$ we have that
\begin{gather}
\operatorname{Char}(L_\sigma) \subseteq \operatorname{WF}(\sigma),
\end{gather}
which implies
\begin{gather}\label{eq:proofwf2}
\operatorname{WF}(u_j) \subseteq \operatorname{WF}(\sigma),\quad j\in\{1,2\}.
\end{gather}
Consider the operator $P$ defined by
\begin{gather}
Pv := \sigma|\nabla v|^p,\quad p>0.
\end{gather}
For $|\nabla v|$ uniformly boundary below by a positive constant, $P$ is elliptic away from $\operatorname{WF}(\sigma) \cup \operatorname{WF}(v)$, and it follows by the pseudo-local nature of pseudo-differential operators that
\begin{gather}\label{eq:proofwf3}
\operatorname{WF}(Pv) \subseteq \operatorname{WF}(\sigma) \cup \operatorname{WF}(v).
\end{gather}
Note that $H_j = P u_j = \sigma | \nabla u_j |^p$. Therefore, from~\eqref{eq:proofwf2} and~\eqref{eq:proofwf3} we get the relation
\begin{gather}
\operatorname{WF}(H_j) \subseteq \operatorname{WF}(\sigma),\quad j\in\{1,2\},
\end{gather}
which by~\eqref{eq:proofwf1} completes the proof.
\end{proof}
\section{Numerical implementation of the linearised inverse problem}
In this chapter we present an implementation of the linearised inverse problem which we use to visualise the results on ellipticity and propagation of singularities. It should be noted that the implemented system is slightly different from, but equivalent to, the normal formulation analysed in the previous chapter. The difference lies in the fact that we do not consider the scalar formulation of the inverse problem, because a numerical implementation of a non-local pseudo differential operator is difficult. Instead we consider the full differential system which can be implemented using the finite element method.
\subsection{The forward problem}
The implementation of the forward problem is used to generate the data sets $\{H_j-\tilde{H}_j\}_{j=1}^J$ and the gradients of the reference potentials $\{\tilde{u}_j\}_{j=1}^J$. Let $L_+^\infty(\Omega)$ denote the space of function bounded above and below by positive constants in $\Omega$. For $\sigma \in L_+^\infty(\Omega)$, we want to solve for $\nabla u_j$ in the linear inhomogeneous problem \eqref{eq:condprob}
with $f_j \in C^\infty(\partial \Omega)$. Let $F_j \in C^\infty(\bar{\Omega})$ be some function satisfying $F_j=f_j$ on $\partial \Omega$. Because $\Omega$ is a bounded open domain with smooth boundary we know that such a function exists. 
By introducing the function $v_j=u_j-F_j$ and the vector function $\boldsymbol w_j$, we can reformulate~\eqref{eq:condprob} as the homogeneous mixed problem
\begin{gather}\label{eq:mixedforward}
\left\{\begin{aligned}
\nabla \cdot  \boldsymbol w_j &= -\nabla \cdot (\sigma \nabla F_j) &&\text{in } \Omega,\\
\boldsymbol w_j &= \sigma \nabla v_j &&\text{in } \Omega,\\
v_j &= 0 &&\text{on } \partial \Omega.
\end{aligned}
\right.
\end{gather}
The solution $\{v_j, \boldsymbol w_j\}$ to~\eqref{eq:mixedforward} then defines the pair $\{u_j, \nabla u_j\}$ using the definition of $v_j$. 

Let $(\cdot,\cdot)$ denote the $L^2(\Omega)$ inner product. We then have the mixed (weak) formulation of~\eqref{eq:mixedforward}:
Find $\{v_j, \boldsymbol w_j\} \in  \{H_0^1(\Omega),H_\text{div}(\Omega)\} $ such that
\begin{gather*}
\left(\sigma^{-1}\boldsymbol  w_j,\boldsymbol \psi\right) + 
(v_j, \nabla \cdot \boldsymbol \psi)+
(\nabla \cdot \boldsymbol w_j, \phi)  = \left(\sigma \nabla F_j, \nabla \phi\right),\\\forall \,\{\phi,\boldsymbol \psi\} \in \{H^1_0(\Omega), H_\text{div}(\Omega)\}.
\end{gather*}
Here $H_{\text{div}}(\Omega)$ denotes the usual (Hilbert) space of vector functions in $[L^2(\Omega)]^n$ for which the divergence is also an $L^2(\Omega)$ function (see e.g.~\cite[Chap. 20]{tartar2007introduction}).

We solve this problem using the finite element method. In the finite dimensional setting we use continuous Galerkin elements which are conforming in $H^1$ and Raviart--Thomas elements which are conforming in $H_\text{div}$~\cite{logg2012automated}. Once the discrete approximate of the pair $\{v_j, \boldsymbol w_j\}$ has been found we can determine an approximation of $\{u_j,\nabla u_j\}$ using the definition of $v_j$. For this we need $\nabla F_j$ which should be implemented using its analytical expression (if it is known) to avoid any unnecessary errors from numerical differentiation. In our case we choose $F_j$ as the harmonic extension of $f_j$, for which the analytic expression is known. 




\subsection{The linearised inverse problem}
To derive an appropriate weak formulation, we use the least squares finite element method to find the $L^2$-minimizer of the original equation. In other words, we define the least squares solution to the collection of second order problems~\eqref{eq:linsystem}, as a function $\boldsymbol x$ that minimizes
\begin{gather}
 \Vert \mathcal{A} \boldsymbol x - \boldsymbol b\Vert_{L^2(\Omega)}^2,
\end{gather}
where
\begin{gather}\label{eq:amatrix}
\mathcal{A} = \begin{bmatrix}
\nabla \cdot ([\cdot] \nabla \tilde{u}_1) & \nabla \cdot (\tilde{\sigma}  \nabla [\cdot]) & \cdots & 0 \\
\vdots & \vdots & \ddots & \vdots\\
\nabla \cdot ([\cdot] \nabla \tilde{u}_J) & 0 & \cdots &  \nabla \cdot (\tilde{\sigma}  \nabla [\cdot])\\
\vert \nabla \tilde{u}_1 \vert^p  &  p\tilde{\sigma} \frac{\nabla \tilde{u}_1  \cdot \nabla [\cdot]  }{\vert \nabla \tilde{u}_1 \vert^{2-p}}& \cdots & 0\\
\vdots & \vdots  & \ddots & \vdots\\
\vert \nabla \tilde{u}_J \vert^p  & 0  & \cdots & p\tilde{\sigma} \frac{\nabla \tilde{u}_J \cdot \nabla [\cdot]  }{\vert \nabla \tilde{u}_J \vert^{2-p}}
\end{bmatrix},
\boldsymbol x=\begin{bmatrix}
\delta \sigma\\
\delta u_1\\
\vdots\\
\delta u_J
\end{bmatrix}
\\
\text{and} \quad
\boldsymbol b = \begin{bmatrix}
0\\
\vdots\\
0\\
H_1 - \tilde{H}_1\\
\vdots\\
H_J -\tilde{H}_J
\end{bmatrix},
\end{gather}
for $\boldsymbol x$ satisfying homogeneous boundary conditions for $\{\delta u_j\}_{j=1}^J$ in appropriate discrete function spaces. A numerical implementation of this system is not practical, because it turns out that $H^2$-conforming finite element spaces are necessary to approximate the functions $\{\delta u_j\}_{j=1}^J$. This follows from the fact that $\mathcal{A}$ is a second order operator. $H^2$-spaces are known to be impractical to implement and as a technical step~\eqref{eq:linsystem} is therefore expressed in the mixed form of a first order system
\begin{gather}\label{eq:mixedformula}
\left\{\begin{aligned}
\hat{\mathcal{A}} \hat{\boldsymbol x}&= \boldsymbol b \\
\mathcal{M} \boldsymbol x  &= \hat{\boldsymbol x}
\end{aligned}
\begin{aligned}
&\text{ in } \Omega,\\
&\text{ in } \Omega,
\end{aligned}
\right.
\end{gather}
where $\mathcal{M} = \operatorname{diag}(1, \nabla, \ldots, \nabla)$ and $\hat{\mathcal{A}}$ is defined by the relation $\mathcal{A} =\hat{\mathcal{A}} \mathcal{M}$. A simple analysis of the operators $\hat{\mathcal{A}}$ and $ \mathcal{M}$ shows that they act as the operators $\hat{\mathcal{A}}: H^1(\Omega) \times [H_\text{div}(\Omega)]^J \rightarrow [L^2(\Omega)]^{2J}$, $\mathcal{M}: H^1(\Omega) \times [H^1(\Omega)]^J \rightarrow H^1(\Omega) \times [L^2(\Omega)]^J$ respectively. Thus, they specify the appropriate function space setting which satisfies the boundary conditions:
\begin{gather}
\{\boldsymbol x , \boldsymbol{\hat{x}}\} \in  \left\{ H^1(\Omega) \times [H_0^1(\Omega)]^J ,H^1(\Omega) \times [H_\text{div}(\Omega)]^J  \right\}.
\end{gather}
We suggest to solve \eqref{eq:mixedformula} in the least squares sense by minimizing the energy functional
\begin{gather}
I( \boldsymbol \tau, \hat{\boldsymbol \tau} ) = \Vert \hat{\mathcal{A}} \hat{\boldsymbol \tau} - \boldsymbol b\Vert_{L^2(\Omega)}^2+\Vert \mathcal{M}\boldsymbol \tau -  \hat{\boldsymbol \tau}\Vert_{L^2(\Omega)}^2.
\end{gather}
This leads to the least squares weak formulation of~\eqref{eq:mixedformula}: Find\\$\{\boldsymbol x, \hat{\boldsymbol x}\} \in \left\{ H^1(\Omega) \times [H_0^1(\Omega)]^J ,H^1(\Omega) \times [H_\text{div}(\Omega)]^J  \right\}$ such that
\begin{gather}\label{eq:systemsolve}
(\hat{\mathcal{A}} \hat{\boldsymbol x} ,\hat{\mathcal{A}} \hat{\boldsymbol \phi}) + (\mathcal{M} \boldsymbol x- \boldsymbol{\hat{x}} , \mathcal{M} \boldsymbol \phi- \hat{\boldsymbol \phi}) = (\boldsymbol b,\hat{\mathcal{A}}\hat{\boldsymbol \phi}),\\ \forall \, \{\boldsymbol \phi, \hat{\boldsymbol \phi}\} \in \left\{ H^1(\Omega) \times [H_0^1(\Omega)]^J ,H^1(\Omega) \times [H_\text{div}(\Omega)]^J  \right\}.
\end{gather}
Note that the essential homogeneous Dirichlet boundary condition for $\{\delta u_j\}_{j=1}^J$ are imposed by the chosen function spaces $H_0^1(\Omega)$. Additionally, the weak formulation implicitly requires that the so-called concomitant of $\hat{\mathcal{A}}\boldsymbol{\hat{x}}-\boldsymbol{b}$ and $\boldsymbol{\hat{\phi}}$ vanishes. This is a direct consequence of the least squares finite element method formulation~\cite[Rem. 3.12]{bochev2009least-squares} and therefore the problem~\eqref{eq:systemsolve} can be identified as a weak formulation of the first order equivalent of the normal equation
\begin{gather}
\mathcal{A}^*\mathcal{A} \boldsymbol{x} =\mathcal{A}^* \boldsymbol{b} \,  \text{ in } \Omega,
\end{gather}
for $\boldsymbol{x} =\{x_1, \ldots, x_{J+1}\} \in  H^1(\Omega) \times [H_0^2(\Omega)]^J$ satisfying the additional boundary condition
\begin{gather}
\{\nabla \cdot (x_1 \nabla \tilde{u}_1) + \nabla \cdot (\tilde{\sigma}  \nabla x_j)\}_{j=2}^{J+1} = 0\,  \text{ on } \partial \Omega.
\end{gather}
Thus, a solution $\delta \sigma$ to the equation~\eqref{eq:lsform} for which we have shown when and how singularities propagate correponds to the top element of a solution vector to the problem~\eqref{eq:systemsolve}. Again, in the finite dimensional setting we use continuous Galerkin elements which are conforming in $H^{1}$ and Raviart--Thomas elements which are conforming in $H_\text{div}$. 


\subsection{Software, geometry, mesh, visualization, phantom}
The implementation of the discrete finite dimensional weak formulations of the forward problem and the linearised inverse problem has been done using The FEniCS Project, a collection of free software with an extensive list of features for automated, efficient solving of partial differential equations~\cite{logg2012automated}.

We will consider the problem in a 2D unit disc geometry, because it is relatively simple to implement and it satisfies the smoothness requirements of the boundary. Meshing is done automatically in FEniCS and the number of elements are dependent on a chosen mesh parameter. For this work we have used a mesh of 48\,400 elements because it provides a good compromise between the finite element methods ability to capture local phenomena and the computational cost. It should, however, be noted that the forward problem is solved on a different mesh with 49\,284 elements. The calculated interior data functionals and the reference potentials are afterwards projected onto the mesh of 48\,400 elements. This is done to avoid committing an inverse crime. For all visualizing purposes we use the open-source visualization software Paraview~\cite{henderson2004the}. 

We choose to linearise around $\tilde{\sigma} = 1$ because the theoretical results presented in Chapter 3 relies on the fact that $\tilde{\sigma}$ is smooth. Additionally it simplifies the expressions for $\{\nabla \tilde{u}_j\}_{j=1}^J$ which makes it very easy to predict the directions in which ellipticity is lost and thereby in which directions singularities can propagate. As a phantom we use the function $\sigma=1$ which is perturbed by $0.1$ in a small rectangular domain, see Fig.~\ref{fig:phantom}. 

\subsection{What to expect}
By the results of Thms.~\ref{thm:pjell} and~\ref{thm:pj2ell} the ellipticity of the linearised inverse problem depends on the direction of the vectors $\{\nabla \tilde{u}_j \}_{j=1}^J$. If the boundary condition $f_j$ is chosen as a linear combination of the coordinate functions, $\nabla \tilde{u}_j$ will be constant in the interior of $\Omega$ because $\tilde{\sigma}$ is constant. Furthermore, $\vert \nabla \tilde{u}_j \vert$ will be bounded below by a positive constant for any $j$ as required by some of the theorems. Thus, if $p\geq 1$, it is not a difficult task to find two boundary conditions such that the system loses ellipticity in known directions.

For the phantom presented in the previous subsection it is a standard exercise to show that the wave front set is limited to the discontinuous boundary of the perturbation and the corresponding $\xi$ vectors are normal to the boundary. In the corners of the perturbation the function is singular in the directions $\xi$ pointing inside the perturbation and the opposite directions pointing outwards. Some of the wave front set is depicted in Fig.~\ref{fig:wavefrontset}, where the starting point of each arrow is the location of the singularity in $\Omega$. To keep the figure simple, only the outwards pointing directions are depicted. 

If $p>1$, the loss of ellipticity will manifest itself as visible artifacts oriented perpendicular to the direction in which ellipticity is lost. 
As an example, if ellipticity is lost in the horizontal direction, singularities will propagate in the vertical direction. It is expected that the reconstruction will be a somewhat smoother representation, due to the additional regularity imposed by the discrete least squares formulation.
\begin{figure}[!htb]
        \centering
        \begin{subfigure}[b]{0.42\textwidth}
                \centering
                \includegraphics[width=\textwidth]{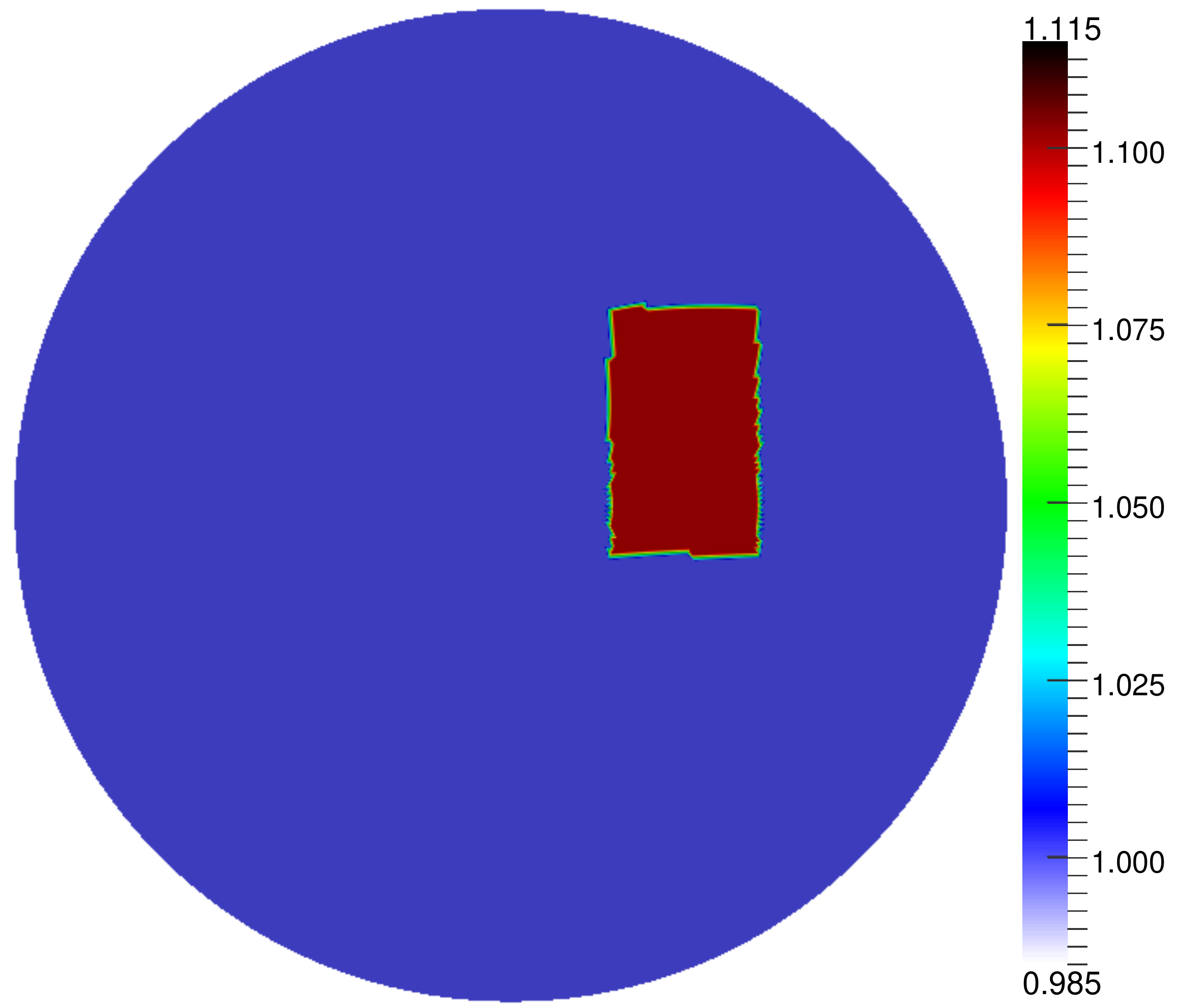}
                \caption{The phantom $\sigma$. Notice the discontinuity along the boundary of the perturbation.}
                \label{fig:phantom}
        \end{subfigure}
        \,\,
           \begin{subfigure}[b]{0.46\textwidth}
                \centering
                \includegraphics[width=0.75\textwidth]{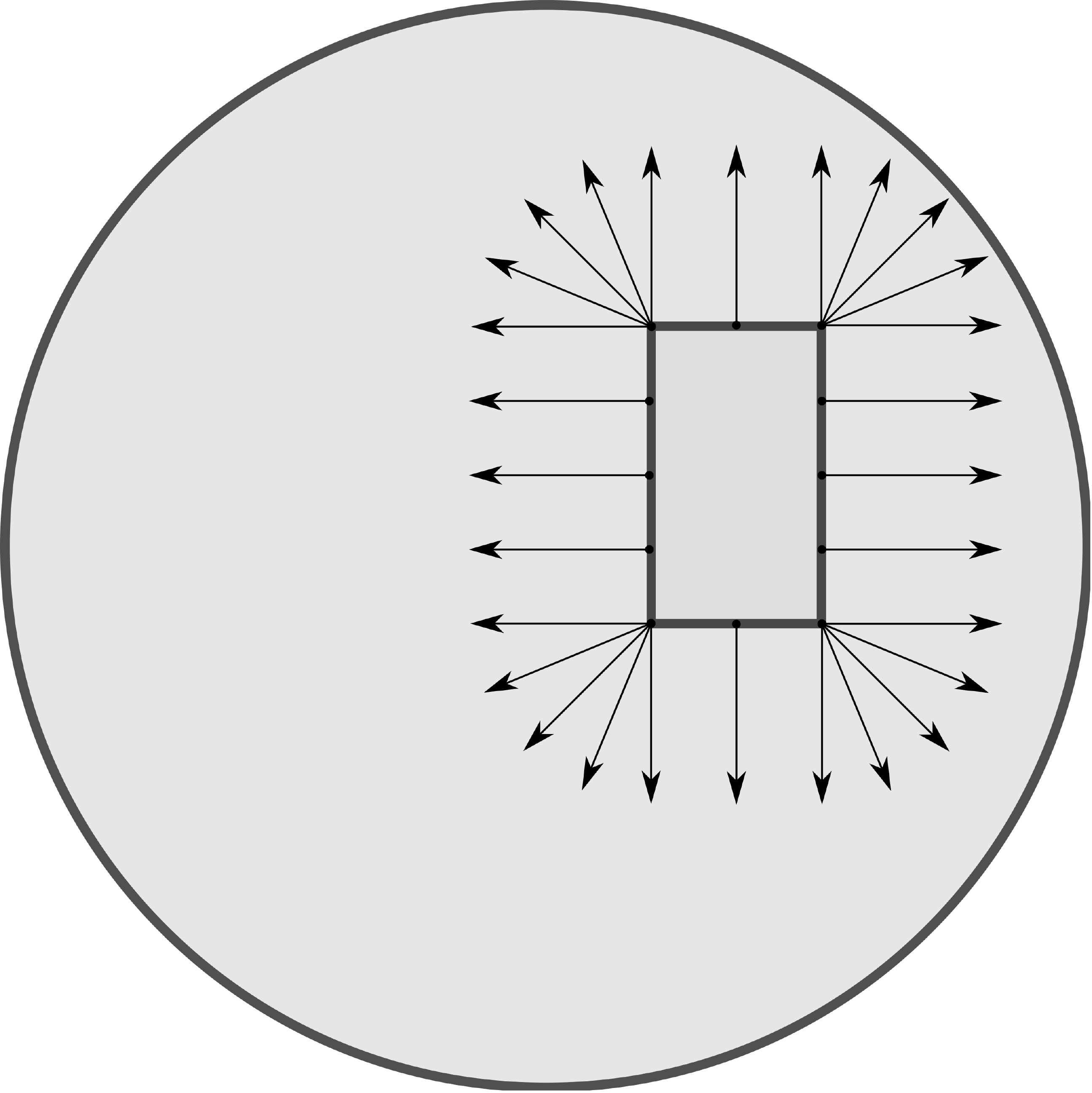}
                \caption{The corresponding wave front set of the phantom. Only some of the outwards pointing directions are depicted.}
                \label{fig:wavefrontset}
        \end{subfigure}
        \caption{The phantom used in the numerical implementation along with an illustration of some of the corresponding wave front set.}\label{fig:phwavefr}
\end{figure}

\section{Numerical results}
In this chapter we present some results of the numerical implementation. We examine how the solution changes when the operator loses ellipticity and also see how the propagation of singularities depends on the modelling parameter~$p$. We also analyse how a clever choice of boundary conditions can reduce the propagation of singularities when the problem is not elliptic.
\subsection{The elliptic and non-elliptic case}
Fig. 2 consists of plots of $\delta \sigma$ for two values of the parameter $p$. For each value we have plotted the reconstruction when the problem is elliptic and when the problem is not elliptic. 
In the elliptic case the reconstructions are close to being perfect, and non-zero values in $\delta \sigma$ is limited to the location of the perturbation. This is very different from the non-elliptic case, where we see that $\delta \sigma$ indeed has several visible artifacts.
The presented results on the propagation of singularities only holds for $p>1$. When $p=1$ we have the borderline case, between a perfect reconstruction modulo smoothing terms (the elliptic case, $p<1$) and the situation where singularities propagate (the hyperbolic case, $p>1$). With a numerical implementation it seems reasonable to expect some kind of combination of these two scenarios in the reconstruction. 
Indeed, the reconstruction presented in Fig.~\ref{fig:mousefw} also shows what looks like a smooth representation of propagation in the vertical direction.
\begin{figure}[!htb]
        \centering
        \begin{subfigure}[b]{0.22\textwidth}
                \centering
                \includegraphics[width=\textwidth]{ep1f2-eps-converted-to.pdf}
                \caption{Elliptic\\($p=1, J=2$)}
                \label{fig:mosdfuse}
        \end{subfigure}
           \begin{subfigure}[b]{0.22\textwidth}
                \centering
                        \captionsetup{justification=centering}
                \includegraphics[width=\textwidth]{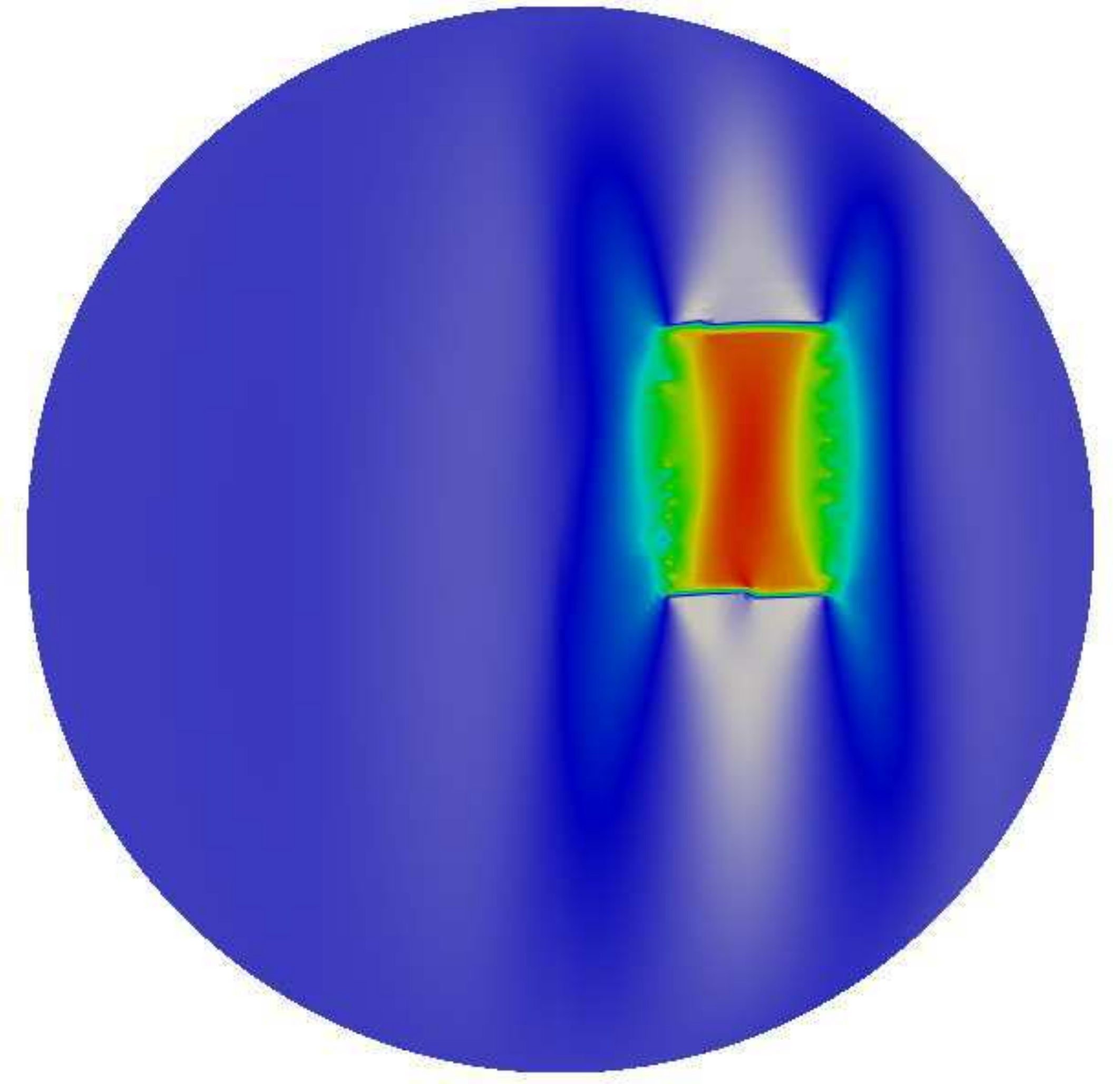}
                \caption{Non-elliptic\\($p=1, J=1$)}
                \label{fig:mousefw}
        \end{subfigure}
        \begin{subfigure}[b]{0.22\textwidth}
                \centering
                \includegraphics[width=\textwidth]{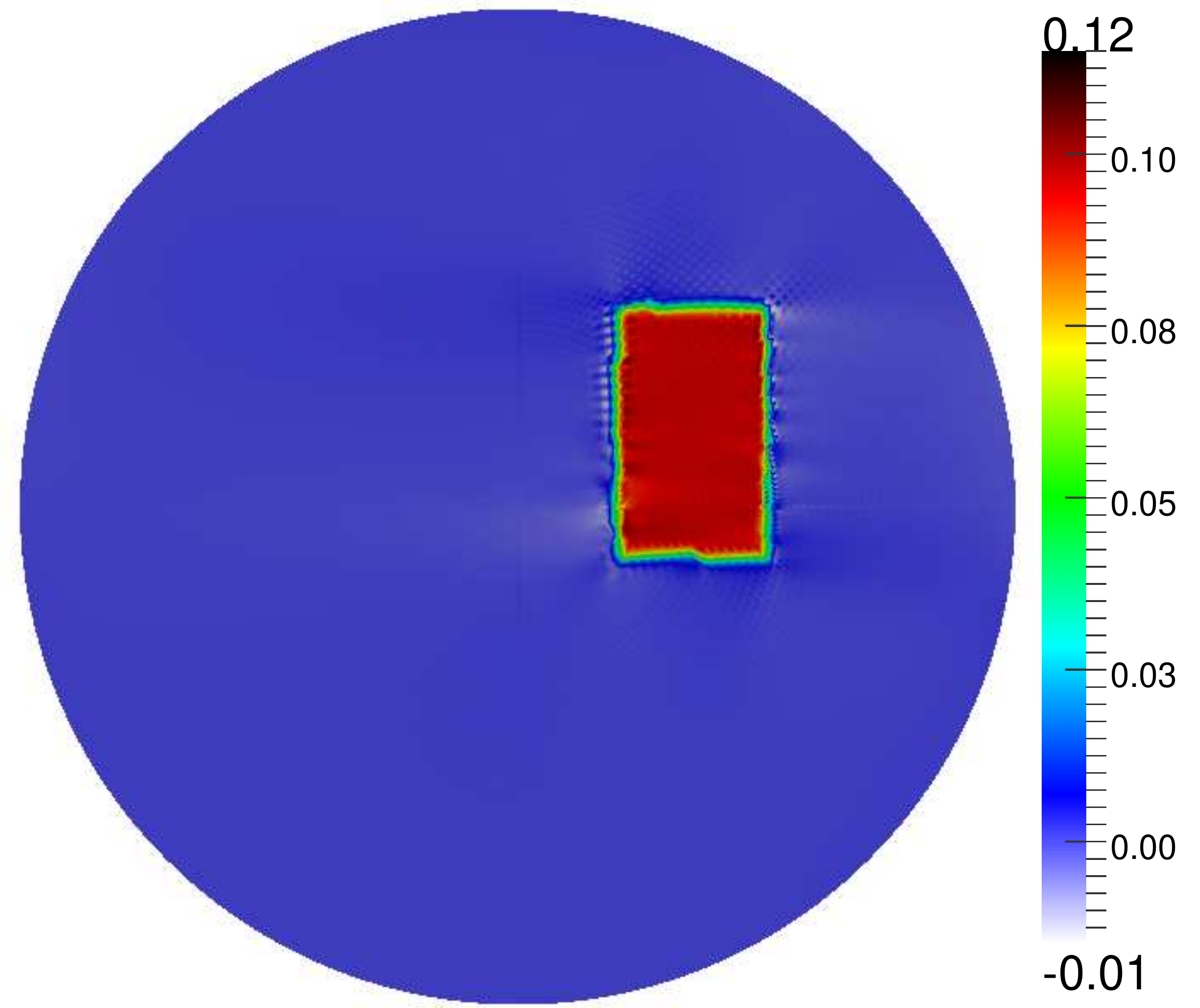}
                \caption{Elliptic\\($p=2, J=2$)}
                \label{fig:mou24fse}
        \end{subfigure}
           \begin{subfigure}[b]{0.215\textwidth}
                \centering
                \includegraphics[width=\textwidth]{nep2f1-eps-converted-to.pdf} \captionsetup{justification=centering}
                \caption{Non-elliptic\\($p=2, J=1$)}
                \label{fig:mousffdse}
        \end{subfigure}
                   \begin{subfigure}[b]{0.038\textwidth}
                \centering
                \includegraphics[width=\textwidth]{colorbar-eps-converted-to.pdf}
             \caption*{}
        \end{subfigure}
        \caption{Reconstructions of $\delta \sigma$ in the elliptic and non-elliptic case for two different values of $p$. The reconstructions clearly show that an non-elliptic problem results in a reconstruction with several artifacts. The difference between the reconstructions when the problem is elliptic and non-elliptic is very clear from the plots.}\label{fig:ellnonell}
\end{figure}

\subsection{The dependence on $p$}
The directions in which ellipticity is lost depend on the parameter $p$ and the directions of $\{\nabla \tilde{u}_j \}_{j=1}^J$. For $J=1, p>1$ there will always be two directions along which ellipticity is lost. These directions will be at angles $\pm \cos^{-1}(p^{-\frac{1}{2}})$ relative to $\nabla \tilde{u}_1$. To verify this dependence on the parameter~$p$ we solve the system for different values of $p$ with $\nabla \tilde{u}_1$ fixed in the horizontal direction, see Figure~\ref{fig:animalsss}. On the top of the figure we have placed arrows which show the predicted directions in which singularities propagate. We clearly see that the propagation of singularities follow the theoretical results. 
\begin{figure}[!htb]
        \centering
        \begin{subfigure}[b]{0.22\textwidth}
                \centering
                \includegraphics[width=\textwidth]{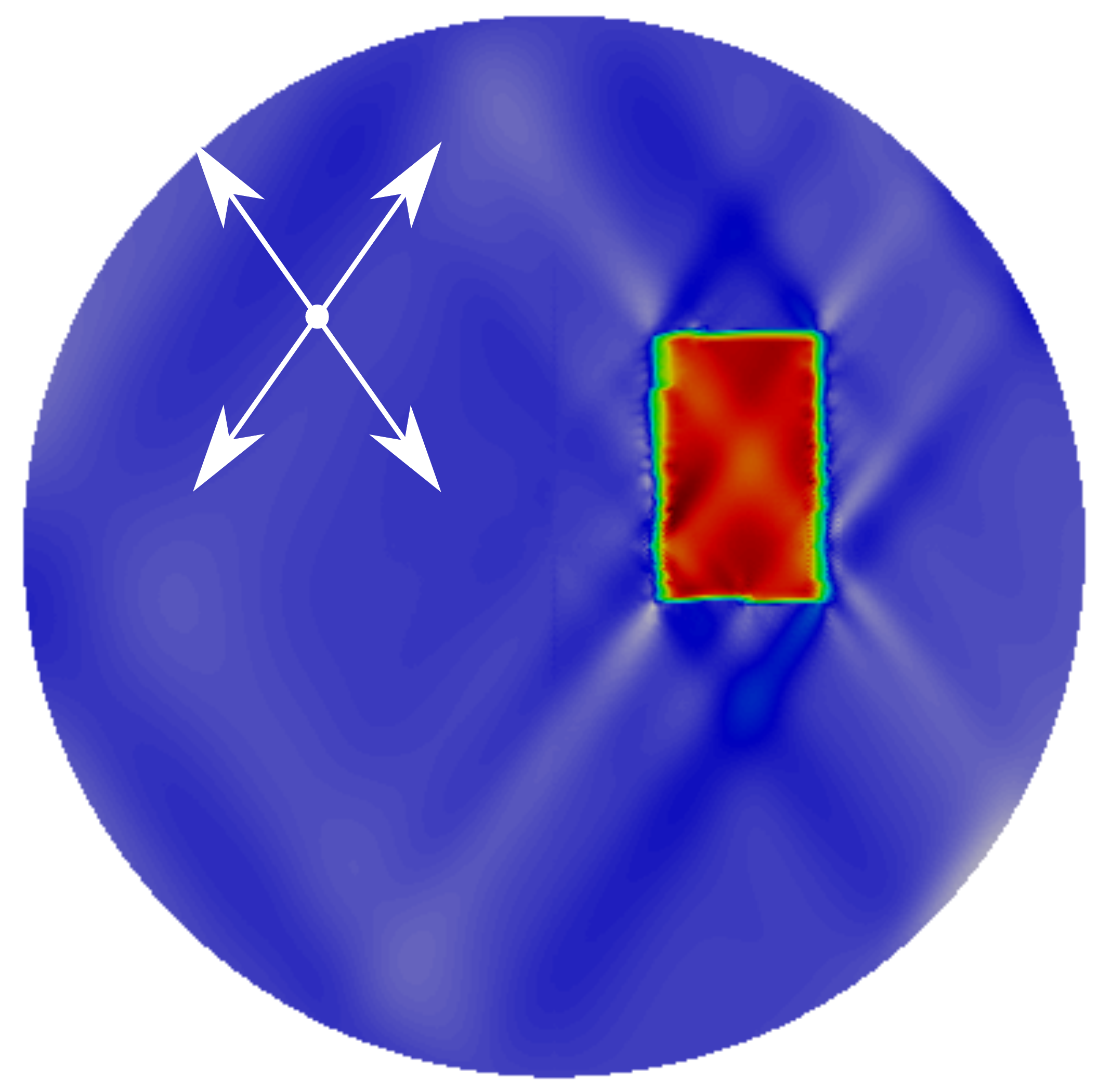}
                \caption{$p=\frac{3}{2}$}
                \label{fig:asdf}
        \end{subfigure}
              \begin{subfigure}[b]{0.22\textwidth}
                \centering
                \includegraphics[width=\textwidth]{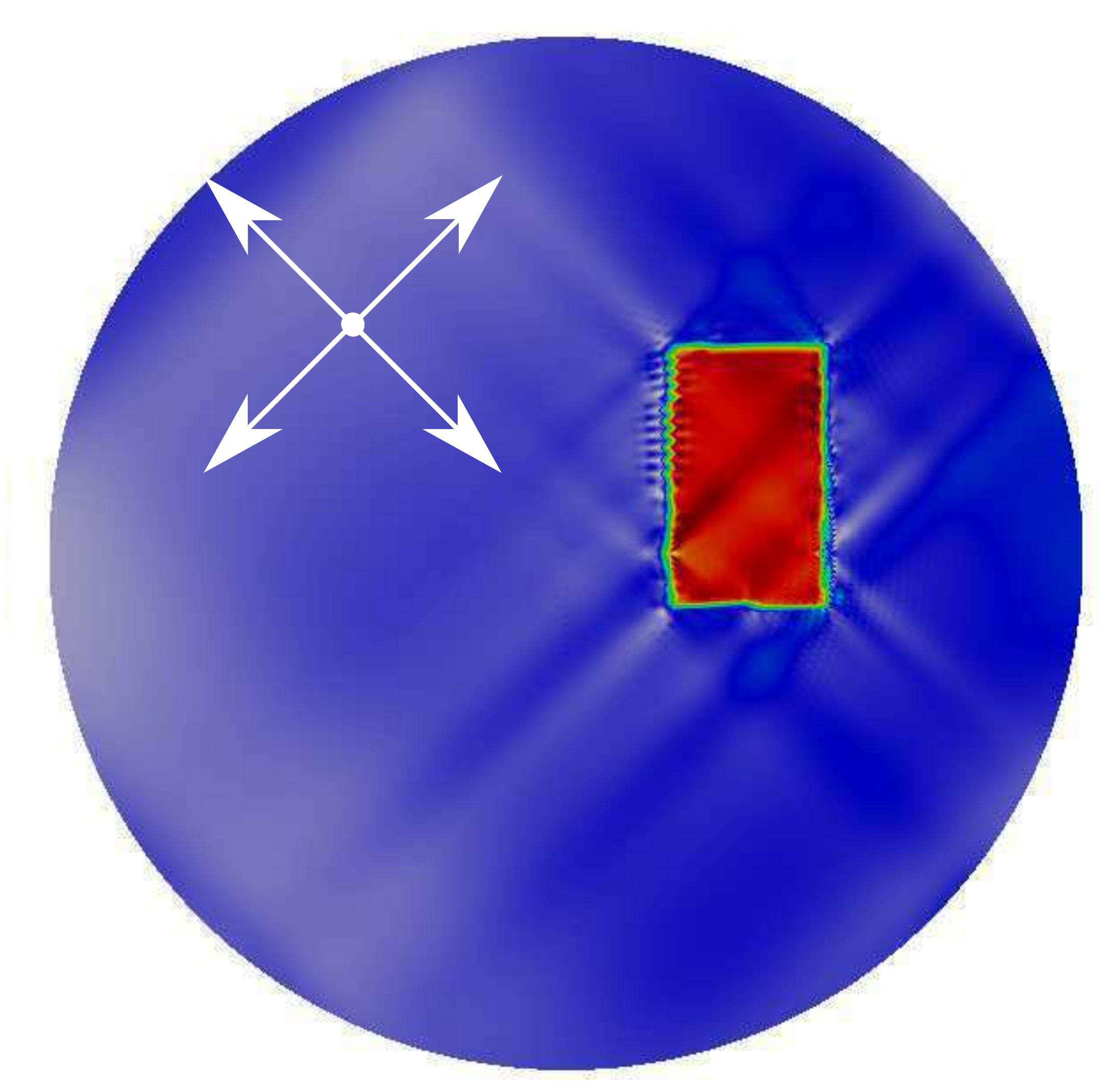}
                \caption{$p=2$}
                \label{fig:asdfgg}
        \end{subfigure}
        \begin{subfigure}[b]{0.22\textwidth}
                \centering
                \includegraphics[width=\textwidth]{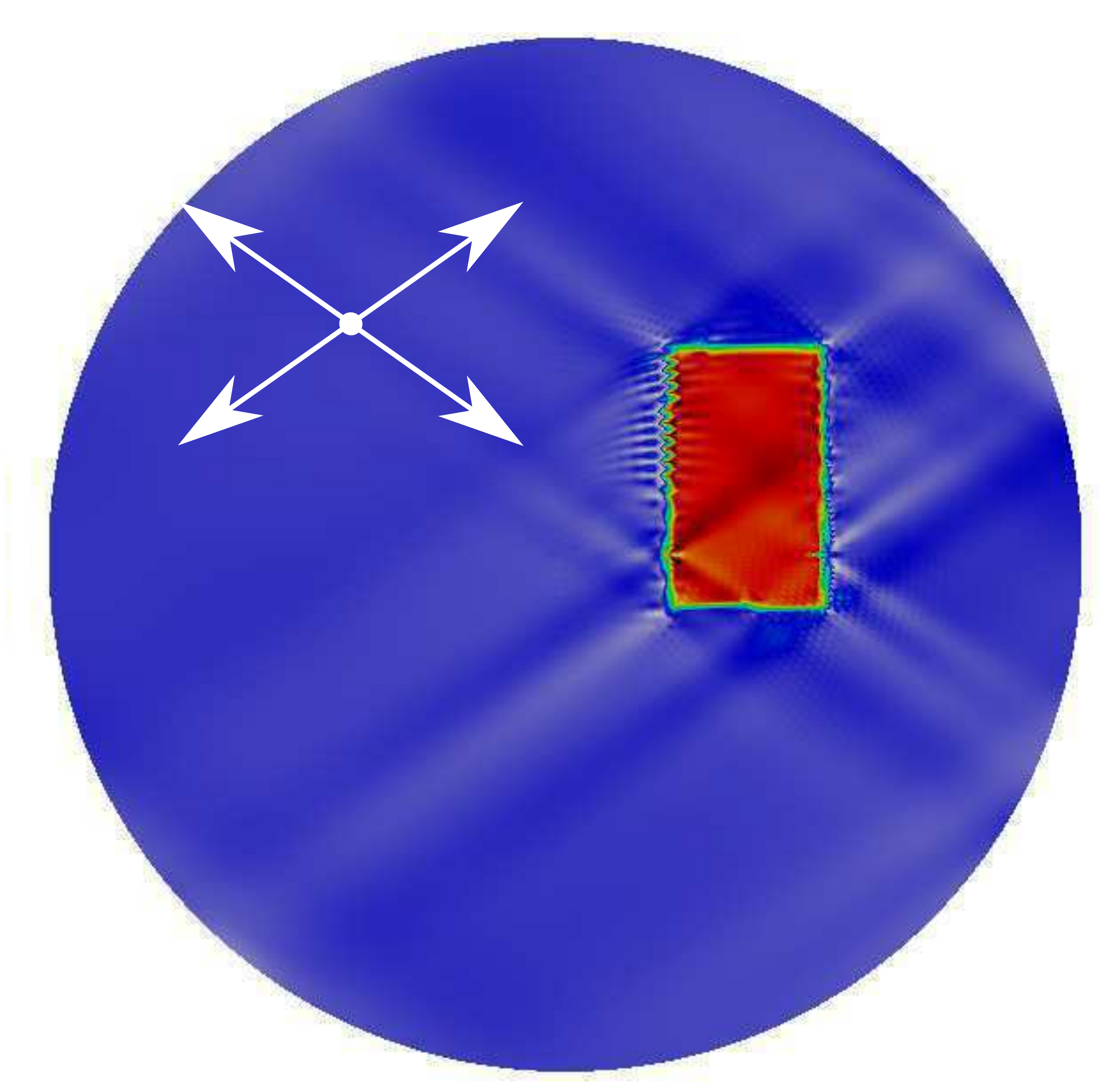}
                \caption{$p=3$}
                \label{fig:mouse55}
        \end{subfigure}
           \begin{subfigure}[b]{0.22\textwidth}
                \centering
                \includegraphics[width=\textwidth]{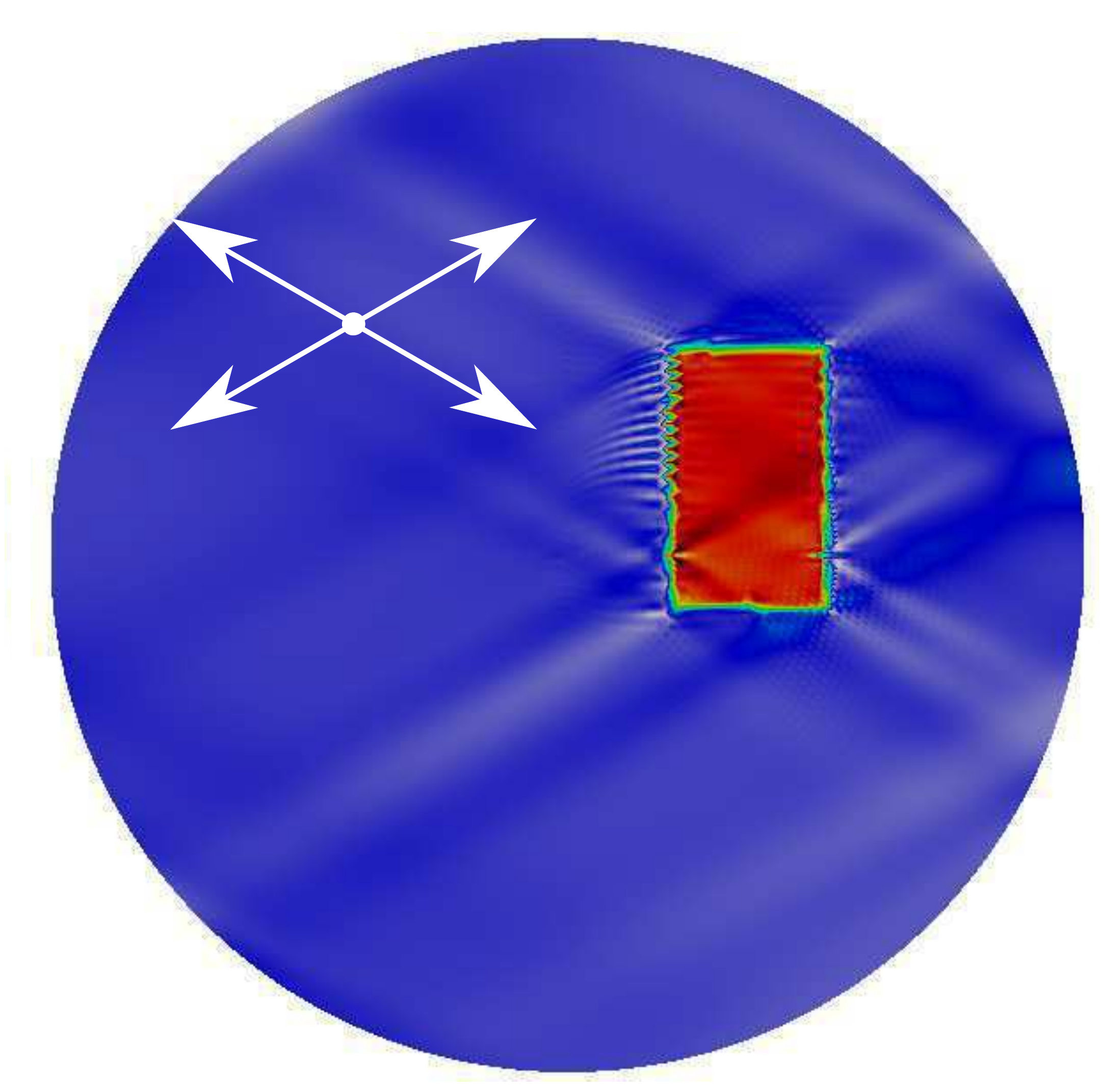}
                \caption{$p=4$}
                \label{fig:mouse32}
        \end{subfigure}
         \begin{subfigure}[b]{0.038\textwidth}
                \centering
                \includegraphics[width=\textwidth]{colorbar-eps-converted-to.pdf}
      
             \caption*{}
        \end{subfigure}
       \caption{Propagation of singularities for a single measurement, where $\nabla \tilde{u}_1$ points in the horizontal direction. The directions in which ellipticity is lost, and therefore also the directions of propagation of singularities, depend on the parameter $p$. }\label{fig:animalsss}
\end{figure}
\subsection{The interaction with the wave front set}
Compared to Fig.~\ref{fig:ellnonell}, the reconstructions in Fig.~\ref{fig:animalsss} seem less affected by the loss of ellipticity. This is because only the singularities located in the corners of the rectangular perturbation have a direction that aligns with the direction in which singularities propagate. Thus, the quality of the reconstruction depends on the direction in which ellipticity is lost and the presence of those directions in the wave front set of the data. As seen in Fig.~\ref{fig:wavefrontset}, the phantom has singularities mainly in the horizontal and vertical direction. If such knowledge about the singularities of the phantom is available, one can choose boundary conditions that could limit the possible effect of propagation of singularities. As an example, when the propagation of singularities is neither in the vertical nor in the horizontal direction, singularities cannot propagate along the boundary of the rectangular perturbation and then one would expect the reconstruction of the boundary to be less affected by the loss of ellipticity. This is indeed also what is seen in Fig.~\ref{fig:animaffls}, where the propagation of singularities happens in different directions. Again, on the top of the figures we have placed arrows which show the predicted directions in which singularities propagate.  Notice how the boundary of the perturbation is not captured well by the reconstruction, when the loss of ellipticity aligns with the direction in which the boundary is singular.
\begin{figure}[!htb]
        \centering
        \begin{subfigure}[b]{0.22\textwidth}
                \centering
               \includegraphics[width=\textwidth]{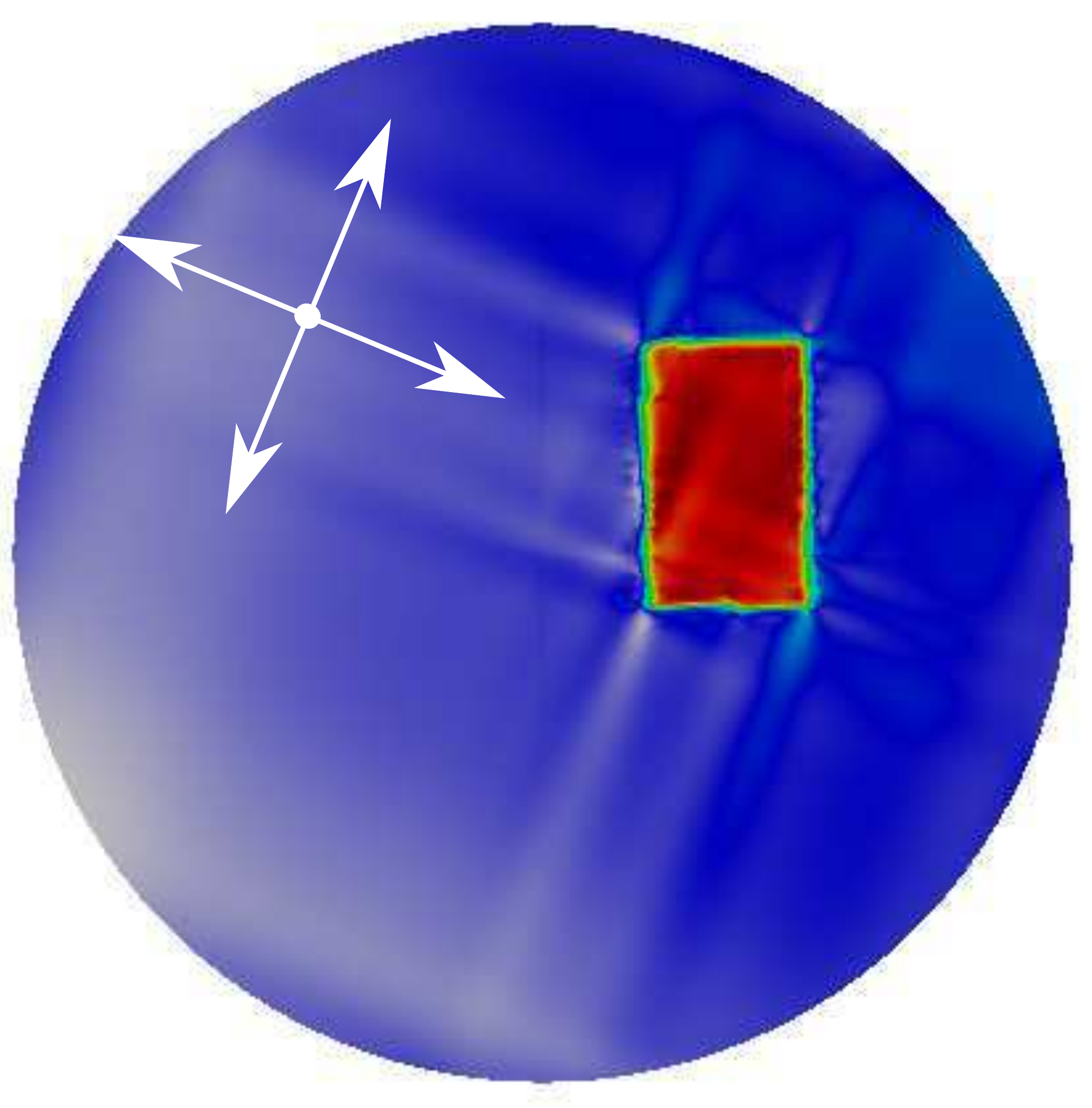}    
                \label{fig:mdsfouse}
        \end{subfigure}
           \begin{subfigure}[b]{0.22\textwidth}
                \centering
    \includegraphics[width=\textwidth]{nep2f12-eps-converted-to.pdf}  
                \label{fig:mou345se}
        \end{subfigure}
                          \begin{subfigure}[b]{0.22\textwidth}
                \centering
              \includegraphics[width=\textwidth]{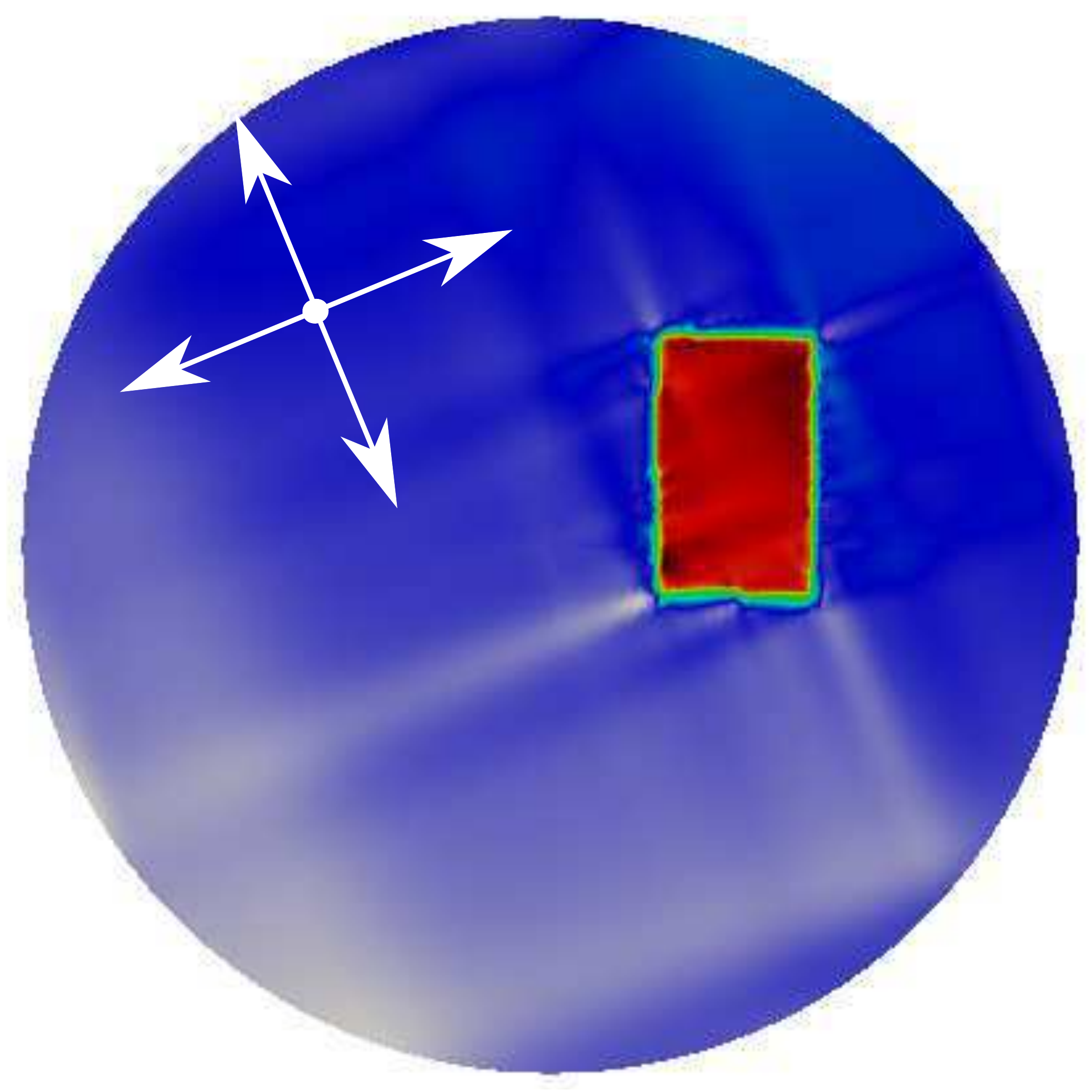}    
                \label{fig:mosfuaaaase}
        \end{subfigure}
                                  \begin{subfigure}[b]{0.22\textwidth}
                \centering
              \includegraphics[width=\textwidth]{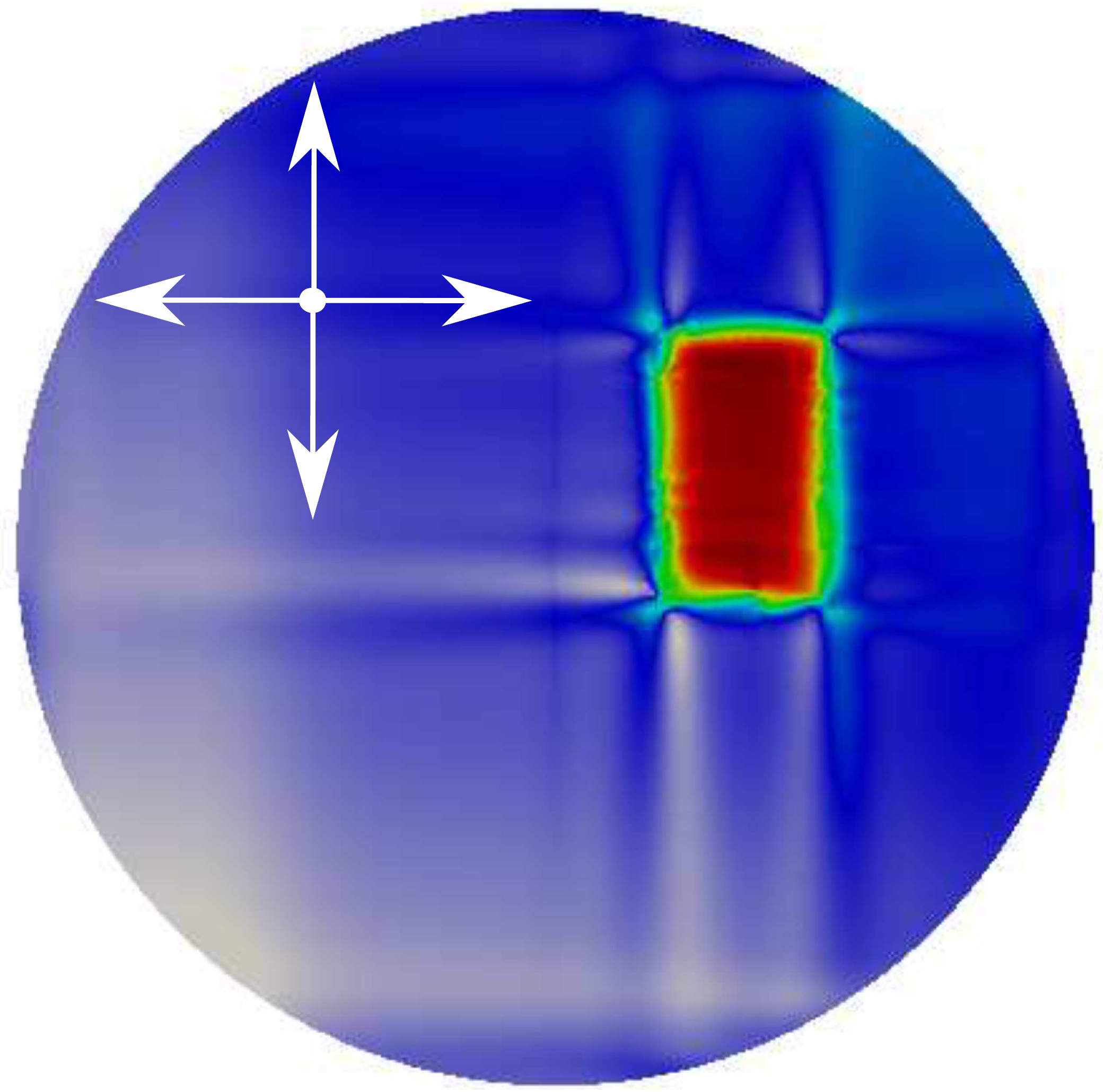}  
                \label{fig:mosfuse}
        \end{subfigure}
         \begin{subfigure}[b]{0.038\textwidth}
                \centering
                \includegraphics[width=\textwidth]{colorbar-eps-converted-to.pdf}
                
             \caption*{}
        \end{subfigure}
        ~ 
       \caption{The propagation of singularities can align with the singularities in the data. Here it is presented for $p=2$. If the propagation happens along curves where the phantom itself is singular it can dramatically affect the reconstruction of the boundary of the perturbation, as seen on the picture to the right.}\label{fig:animaffls}
\end{figure}

\section{Conclusion}
The similarity between an exact solution to the original non-linear inverse problem and the solution obtained from the linearised inverse problem, is highly dependent on the presence of non-smooth errors in the reconstruction. It is clear that any type of characterisation of such errors is very valuable. A theoretical analysis based on the theory of propagation of singularities provided results on when singularities will be present in the reconstruction and how these are related to the singularities in the data. In the case $p>1$, it was explained in Chap. 3 how the linearised problem and the corresponding normal problem, permit propagation of singularities in directions perpendicular to any direction along which ellipticity is lost. The linearised inverse problem was implemented numerically by means of the least squares finite element method. The corresponding weak formulation turned out to be simple and is formulated independently on the number of measurements. 
To verify the theoretical results a numerical analysis was performed for a simple phantom and the numerical implementation produced reconstructions with features that could be explained by the derived theoretical results.


\bibliographystyle{plain}
\bibliography{bib}	
\end{document}